\theoremstyle{plain}
\newtheorem{theorem}{Theorem}[section]
\newtheorem{lemma}[theorem]{Lemma}
\newtheorem{corollary}[theorem]{Corollary}
\theoremstyle{definition}
\newtheorem*{definition}{Definition}
\newcommand{\B}{\mathcal{B}}
\newcommand{\G}{\Gamma}
\newcommand{\parn}[1]{\left(#1\right)}
\newcommand{\1}{\mathbf{1}}
\newcommand{\J}{\mathcal{J}}
\newcommand{\tr}{\mathrm{tr}}
\begin{document}
\title[Ladder, circular ladder, and M\"{o}bius graphs]{Explicit formulas for matrices associated to ladder, circular ladder, and M\"{o}bius ladder graphs}
\author{Ali Azimi}
\address{Department of Mathematics and Applied Mathematics, Xiamen University Malaysia, 43900, Sepang, Selangor Darul Ehsan, Malaysia}
\email{ali.azimi61@gmail.com,\ ali.azimi@xmu.edu.my}

\author{M. Farrokhi D. G.}
\address{Research Center for Basic Sciences and Modern Technologies (RBST), Institute for Advanced Studies in Basic Sciences (IASBS), Zanjan 45137-66731, Iran}
\email{m.farrokhi.d.g@gmail.com,\ farrokhi@iasbs.ac.ir}

\subjclass[2000]{Primary 05C20, 05C50, Secondary 15A09.}
\keywords{Moore-Penrose inverse, resistance distance, Kirchhoff index, ladder graph, circular ladder graph, M\"{o}bius ladder graph}

\begin{abstract}
We give explicit formulas for resistance distance matrices and Moore-Penrose inverses of incidence and Laplacian matrices of ladder, circular ladder, and M\"{o}bius ladder graphs. As a result, we compute the Kirchhoff index of these graphs and give new combinatorial formulas for the number of their spanning trees.
\end{abstract}
\maketitle
\section{introduction}
The incidence matrix $Q(\Gamma)$ of an oriented graph $\Gamma$ with vertex-set $V(\Gamma) = \{v_1,\ldots, v_n\}$ and edge-set $E(\Gamma) = \{e_1,\ldots, e_m\}$ is an  $n\times m$ matrix defined as follows: The rows and columns of $Q$ are indexed by vertices and edges of $\Gamma$, respectively. The $(i, j)$ entry of $Q$ is $0$ if the vertex $v_i$ and the edge $e_j$ are not incident, otherwise it is $1$ or $-1$ depending on whether $e_j$ starts from or ends at $v_i$. The Laplacian matrix $L(\Gamma)$ of $\Gamma$, that is actually equal to $Q(\Gamma)Q'(\Gamma)$, is an $n\times n$ matrix whose rows and columns are indexed by vertices of $\Gamma$. The $(i, j)$ entry of $L(\Gamma)$ is equal to the degree of the vertex $v_i$ if $i = j$, and it is $-1$ or $0$ if the vertices $v_i$ and $v_j$ are adjacent or non-adjacent, respectively. 

The Moore-Penrose inverse of an $m\times n$ real matrix $A$, denoted by $A^+$, is the (unique) $n\times m$ real matrix that satisfies the following equations 
\[AA^+A=A,\hspace{0.4cm} A^+AA^+=A^+,\hspace{0.4cm} (AA^+)'=AA^+,\hspace{0.4cm} (A^+A)'=A^+A\]
(see \cite{ab-tneg, slc-cdm}). The Moore-Penrose inverse of the incidence matrix of a graph was first studied by Ijira in \cite{yi}. We refer the interested reader to \cite{aa-rbb:2018, aa-rbb:2019, aa-rbb-ee} for previous studies on the Moore-Penrose inverses of matrices associated to various classes of graphs.

Incidence matrices and their inverses, such as the Moore-Penrose inverses, are important tools used in analysis of graphs. Let $\Gamma$ be a simple connected graph. The resistance distance between two vertices $v_i$ and $v_j$ can be computed via the formula
\[r(v_i,v_j)=L^+_{v_i,v_i}+L^+_{v_j,v_j}-L^+_{v_i,v_j}-L^+_{v_j,v_i},\]
where $L^+_{v_i,v_j}$ denotes the $(i, j)$-entry of the Moore-penrose inverse $L^+$ of $L=L(\Gamma)$. The notion of resistance distance between vertices of a graph is introduced by Klein and Randic in 1993 \cite{djk-mr} as the effective resistance between the given vertices (computed by Ohm's law) when a battery is attached between them assuming that all edges are unit resistors.  Beside many applications in physics, chemistry, and network sciences (see \cite{ee-md, mk-bm, yy-sy}), resistance distances can also be studied from a purely mathematical point of view as they represent new metrics on graph as well as making connections to graph theoretical objects like the Moore-Penrose inverse of Laplacian matrices above, see \cite{pa-fa-rbb, rbb-sg, nlb, kcd-yy, xg-yl-wl, svg, ah-hc-qd}. A strictly related parameter is the so-called Kirchhoff index of the graph, namely 
\[Kf(\G) = \frac{1}{2}\sum_{i=1}^{n}\sum_{j=1}^{n}r(v_i,v_j).\]

In this paper, we consider three familiar classes of graphs, namely ladder, circular ladder, and M\"{o}bius graphs, and study their resistance distances and the Moore-Penrose inverse of their Laplacian matrices. Recall that the ladder graph $L_n$ on $2n$ vertices is composed of two paths of length $n-1$ with spokes/rungs connecting corresponding vertices. The circular ladder graph $CL_n$ defines analogous to ladder graphs with paths replaced by cycles. Also, the M\"{o}bius ladder graph $M_n$ obtains from the circular ladder graph by switching two non-spoke edges of a square.
\section{Preliminaries}
In this section, we introduce some recursive sequences arising from the study of ladder, circular ladder, and M\"{o}bius ladder graphs. 
\begin{definition}
By a $(p,q)$-sequence $\{x_n\}$ we mean a sequence satisfying $x_n=px_{n-1}-qx_{n-2}$ for all $n$ provided that $x_{n-1}$ and $x_{n-2}$ are defined.
\begin{itemize}
\item $\{a_n\}_{n=0}^\infty$ is the $(4,1)$-sequence \href{https://oeis.org/A001835}{A001835} with initial values $(a_0,a_1)=(1,1)$.
\item $\{s_n\}_{n=0}^\infty$ is the $(4,1)$-sequence \href{https://oeis.org/A001353}{A001353} with initial values $(s_0,s_1)=(0,1)$.
\end{itemize}
\end{definition}

Lemma \ref{sequences} presents a list of identities satisfied by these sequences and will be used frequently in the rest of the paper.
\begin{lemma}\label{sequences}
The following identities hold among sequences $\{a_n\}$ and $\{s_n\}$:
\begin{align}
\label{s_n=a_1+...+a_n}
s_n&=\sum_{i=1}^na_i,\\
\label{2s_n=a_(n+1)-a_n}
2s_n&=a_{n+1}-a_n,\\
\label{a_(n+1)-a_n=a_(i+1)a_(n-i+1)-a_ia_(n-i)}
a_{n+1}-a_n&=a_{k+1}a_{n-k+1}-a_ka_{n-k},\\
\label{s_n=a_is_(n-i)+a_(n-i+1)s_i}
s_n&=a_ks_{n-k}+a_{n-k+1}s_k,\\
\label{(a_(n+1)+a_n)^2=3(a_(n+1)-a_n)^2+4}
(a_{n+1}+a_n)^2&=3(a_{n+1}-a_n)^2+4,\\
\label{a_1a_n+...+a_na_1=(n+1)s_n+2na_n)/6}
\sum_{i+j=n+1}^na_ia_j&=\frac{n(a_{n+1}+a_n)+(a_{n+1}-a_n)}{6}=\frac{(n+1)s_n+na_n}{3}
\end{align}
for all $n\geq k\geq 0$.
\end{lemma}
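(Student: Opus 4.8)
The plan is to reduce all six identities to Binet-type closed forms for the two sequences. Being $(4,1)$-sequences, both $\{a_n\}$ and $\{s_n\}$ are governed by the characteristic equation $t^2-4t+1=0$, whose roots $\alpha=2+\sqrt{3}$ and $\beta=2-\sqrt{3}$ satisfy $\alpha\beta=1$, $\alpha+\beta=4$ and $\alpha-\beta=2\sqrt{3}$. Matching the stated initial values yields
\[
s_n=\frac{\alpha^n-\beta^n}{2\sqrt{3}},\qquad a_n=\frac{(\sqrt{3}-1)\,\alpha^n+(\sqrt{3}+1)\,\beta^n}{2\sqrt{3}}.
\]
From these (or, more economically, by noting that each side below is again a $(4,1)$-sequence and checking the values at $n=0,1$) I would first record the three basic facts
\[
a_{n+1}+a_n=\alpha^n+\beta^n,\qquad a_{n+1}-a_n=2s_n,\qquad a_n=s_n-s_{n-1},
\]
which carry most of the argument: the middle one is exactly \eqref{2s_n=a_(n+1)-a_n}, and summing the last one telescopically gives $\sum_{i=1}^{n}a_i=s_n$, i.e.\ \eqref{s_n=a_1+...+a_n}.

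Identity \eqref{(a_(n+1)+a_n)^2=3(a_(n+1)-a_n)^2+4} is then immediate: squaring $a_{n+1}+a_n=\alpha^n+\beta^n$ and squaring $a_{n+1}-a_n=2s_n=(\alpha^n-\beta^n)/\sqrt{3}$, and using $\alpha\beta=1$, both $(a_{n+1}+a_n)^2$ and $3(a_{n+1}-a_n)^2+4$ collapse to $\alpha^{2n}+\beta^{2n}+2$. Identities \eqref{a_(n+1)-a_n=a_(i+1)a_(n-i+1)-a_ia_(n-i)} and \eqref{s_n=a_is_(n-i)+a_(n-i+1)s_i} are ``addition formulas'': substituting the closed forms and expanding, every cross term has the form $\alpha^{p}\beta^{q}$ with $p+q=n$, hence collapses via $\alpha\beta=1$ to a single power $\alpha^{|p-q|}$ or $\beta^{|p-q|}$, and after collecting terms what survives is $2s_n$ in the first identity and $s_n$ in the second. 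For a cleaner exposition I would instead prove \eqref{a_(n+1)-a_n=a_(i+1)a_(n-i+1)-a_ia_(n-i)} by fixing $n$ and inducting on $k$ — the case $k=0$ is trivial since $a_0=a_1=1$, and the step $k\to k+1$ is one line once $a_{k+2}$ is replaced by $4a_{k+1}-a_k$ — and prove \eqref{s_n=a_is_(n-i)+a_(n-i+1)s_i} by first using $a_n=s_n-s_{n-1}$ to rewrite its right-hand side as $s_ks_{n-k+1}-s_{k-1}s_{n-k}$ and then invoking the classical convolution identity $s_{p+q}=s_{p+1}s_q-s_ps_{q-1}$, itself an easy induction on $q$.

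The genuine work is \eqref{a_1a_n+...+a_na_1=(n+1)s_n+2na_n)/6}. Put $T_n=\sum_{i=1}^{n}a_ia_{n+1-i}$. Substituting $a_i=s_i-s_{i-1}$ and $a_{n+1-i}=s_{n+1-i}-s_{n-i}$ expands $T_n$ into four partial self-convolutions of $\{s_n\}$, and, filling in the missing end terms using $s_0=0$, these reorganize into
\[
T_n=U_{n+1}-2U_n+U_{n-1},\qquad\text{where}\quad U_m=\sum_{i=0}^{m}s_is_{m-i}.
\]
For $U_m$ one substitutes $s_i=(\alpha^i-\beta^i)/(2\sqrt{3})$ and uses the factorization $\alpha^{m+1}-\beta^{m+1}=(\alpha-\beta)\sum_{i=0}^{m}\alpha^i\beta^{m-i}$, that is $\sum_{i=0}^{m}\alpha^i\beta^{m-i}=s_{m+1}$, to obtain
\[
U_m=\frac{(m+1)(a_{m+1}+a_m)-2s_{m+1}}{12}.
\]
Feeding this into $T_n=U_{n+1}-2U_n+U_{n-1}$ and simplifying with the recurrences for $\{a_n\}$ and $\{s_n\}$ (together with $a_{n+1}-a_n=2s_n$) reduces the first equality in \eqref{a_1a_n+...+a_na_1=(n+1)s_n+2na_n)/6} to the elementary identity $3(a_{n+1}+a_n)-(a_n+a_{n-1})=2(s_n+s_{n+1})$, both sides of which equal $4a_{n+1}-2a_n$. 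The second equality in \eqref{a_1a_n+...+a_na_1=(n+1)s_n+2na_n)/6} is then pure algebra: substitute $a_{n+1}=a_n+2s_n$ into $\tfrac16\bigl(n(a_{n+1}+a_n)+(a_{n+1}-a_n)\bigr)$ and collect.

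The only part I expect to be at all fiddly is \eqref{a_1a_n+...+a_na_1=(n+1)s_n+2na_n)/6}, both in the index-shuffling that produces $U_{n+1}-2U_n+U_{n-1}$ and in the subsequent recurrence-chasing; neither step is deep. A uniform but less transparent alternative for the whole lemma is to observe that each identity equates two sequences built from $\alpha^n,\beta^n$ (and $n\alpha^n,n\beta^n$ in the last case), so each reduces to checking that both sides satisfy one common constant-coefficient linear recurrence and agree on finitely many initial terms.
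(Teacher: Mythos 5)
Your proposal is correct, and for identities \eqref{s_n=a_1+...+a_n}--\eqref{(a_(n+1)+a_n)^2=3(a_(n+1)-a_n)^2+4} it essentially coincides with the paper's argument: the paper likewise proves \eqref{s_n=a_1+...+a_n}, \eqref{2s_n=a_(n+1)-a_n} by induction, \eqref{a_(n+1)-a_n=a_(i+1)a_(n-i+1)-a_ia_(n-i)} by induction on $k$, and \eqref{s_n=a_is_(n-i)+a_(n-i+1)s_i}, \eqref{(a_(n+1)+a_n)^2=3(a_(n+1)-a_n)^2+4} from the Binet formulas, so your variants (telescoping via $a_n=s_n-s_{n-1}$, the convolution identity $s_{p+q}=s_{p+1}s_q-s_ps_{q-1}$) are cosmetic differences; just note that your rewriting of \eqref{s_n=a_is_(n-i)+a_(n-i+1)s_i} involves $s_{k-1}$, so the case $k=0$ should be checked separately (it is trivial). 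Where you genuinely diverge is \eqref{a_1a_n+...+a_na_1=(n+1)s_n+2na_n)/6}. The paper stays entirely inside the recurrence: setting $b_n=\sum_{i=1}^na_ia_{n+1-i}$, it derives three independent linear relations, $b_n-4b_{n-1}+b_{n-2}=a_n-a_{n-1}$ (from the defining recurrence applied inside the convolution), $b_n-b_{n-2}=na_n-(n-2)a_{n-1}$ (from \eqref{a_(n+1)-a_n=a_(i+1)a_(n-i+1)-a_ia_(n-i)}), and $b_n+b_{n-1}=na_n+s_{n-1}$ (by summing the second), and then solves this $3\times3$ system for $b_n$ without ever needing a closed form. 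Your route instead passes through the self-convolution $U_m=\sum_{i=0}^m s_is_{m-i}$, evaluates it in closed form via Binet and the geometric-sum identity $\sum_{i=0}^m\alpha^i\beta^{m-i}=s_{m+1}$, and recovers $b_n$ as the second difference $U_{n+1}-2U_n+U_{n-1}$; I checked the resulting formula $U_m=\frac{(m+1)(a_{m+1}+a_m)-2s_{m+1}}{12}$ and the final simplification, and they are right. The trade-off: the paper's system-solving avoids irrational arithmetic and explains where the polynomial factor $n$ comes from (it enters through the telescoping in $b_n-b_{n-2}$), whereas your calculation is more mechanical and yields the extra dividend of an explicit formula for the convolution $U_m$ of the $s$-sequence, which is not in the paper. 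Either is acceptable; neither has a gap.
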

\begin{proof}
The equations \eqref{s_n=a_1+...+a_n} and \eqref{2s_n=a_(n+1)-a_n} follow by induction on $n$ and the equation \eqref{a_(n+1)-a_n=a_(i+1)a_(n-i+1)-a_ia_(n-i)} follows by induction on $k$. Let $\lambda:=2+\sqrt{3}$, $\alpha_a:=(3-\sqrt{3})/6$, $\beta_a:=(3+\sqrt{3})/6$, and $\alpha_s=-\beta_s=\sqrt{3}/6$. Then $a_n$ and $s_n$ are given by the Binet formulas 
\[a_n=\alpha_a\lambda^n+\beta_b\lambda^{-n}\quad\text{and}\quad s_n=\alpha_s\lambda^n+\beta_s\lambda^{-n}.\]
The equations \eqref{s_n=a_is_(n-i)+a_(n-i+1)s_i} and \eqref{(a_(n+1)+a_n)^2=3(a_(n+1)-a_n)^2+4} can be obtained simply from the Binets formulas. Now, we prove equation \eqref{a_1a_n+...+a_na_1=(n+1)s_n+2na_n)/6}. Let $b_n:=\sum_{i=1}^na_ia_{n+1-i}$ for all $n\geq1$. First observe that
\begin{align*}
b_n=\sum_{i=1}^na_ia_{n+1-i}&=\sum_{i=1}^n(4a_ia_{n-i}-a_ia_{n-1-i}),
\end{align*}
which implies that 
\begin{equation}\label{b_n: 1}
b_n-4b_{n-1}+b_{n-2}=a_n-a_{n-1}
\end{equation}
On the other hand, by equation \eqref{a_(n+1)-a_n=a_(i+1)a_(n-i+1)-a_ia_(n-i)},
\begin{align*}
b_n-b_{n-2}&=a_1a_n+\parn{(a_2a_{n-1}-a_1a_{n-2})+\cdots+(a_{n-1}a_2-a_{n-2}a_1)}+a_na_1\\
&=2a_n+(n-2)(a_n-a_{n-1})
\end{align*}
so that 
\begin{equation}\label{b_n: 2}
b_n-b_{n-2}=na_n-(n-2)a_{n-1}.
\end{equation}
Summing up the equations \eqref{b_n: 2} when $n$ is replaced by $3,\ldots,n$, it follows that
\begin{equation}\label{b_n: 3}
b_n+b_{n-1}=na_n+s_{n-1}.
\end{equation}
Now, by solving the system of linear equations \eqref{b_n: 1}, \eqref{b_n: 2}, and \eqref{b_n: 3} for $b_n$, $b_{n-1}$, and $b_{n-2}$, the result follows.

\end{proof}	
\section{ladder graph}
Let $L_n$ denote the ladder graph of order $2n$. Suppose vertices along the top and bottom of $L_n$, as depicted in Fig. \ref{ladder graph}, are labeled $u_1^+,\ldots, u_n^+$ and $u_1^-,\ldots, u_n^-$, respectively. Also, suppose the edges of $L_n$ are oriented as $e_i^\varepsilon=(u_i^\varepsilon,u_{i+1}^\varepsilon)$ for $i=1,\ldots,n-1$ with $\varepsilon=\pm1$, and $f_i=(u_i^+,u_i^-)$ for $i=1,\ldots,n$, where $f_i$ are the spokes.
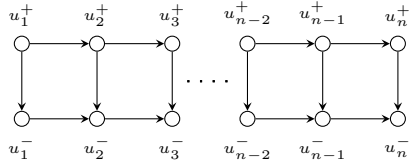
\begin{figure}[h]
\begin{tikzpicture}[>=stealth]
\draw [loosely dotted,thick] (2.2,0.5)--(2.8,0.5);
\node [draw,circle,fill=white,inner sep=2pt,label=below:\tiny{$u_1^-$}] (A) at (0,0) {};
\node [draw,circle,fill=white,inner sep=2pt,label=below:\tiny{$u_2^-$}] (B) at (1,0) {};
\node [draw,circle,fill=white,inner sep=2pt,label=below:\tiny{$u_3^-$}] (C) at (2,0) {};
\node [draw,circle,fill=white,inner sep=2pt,label=below:\tiny{$u_{n-2}^-$}] (D) at (3,0) {};
\node [draw,circle,fill=white,inner sep=2pt,label=below:\tiny{$u_{n-1}^-$}] (E) at (4,0) {};
\node [draw,circle,fill=white,inner sep=2pt,label=below:\tiny{$u_n^-$}] (F) at (5,0) {};

\draw [loosely dotted,thick] (2.2,0.5)--(2.8,0.5);
\node [draw,circle,fill=white,inner sep=2pt,label=above:\tiny{$u_1^+$}] (G) at (0,1) {};
\node [draw,circle,fill=white,inner sep=2pt,label=above:\tiny{$u_2^+$}] (H) at (1,1) {};
\node [draw,circle,fill=white,inner sep=2pt,label=above:\tiny{$u_3^+$}] (I) at (2,1) {};
\node [draw,circle,fill=white,inner sep=2pt,label=above:\tiny{$u_{n-2}^+$}] (J) at (3,1) {};
\node [draw,circle,fill=white,inner sep=2pt,label=above:\tiny{$u_{n-1}^+$}] (K) at (4,1) {};
\node [draw,circle,fill=white,inner sep=2pt,label=above:\tiny{$u_n^+$}] (L) at (5,1) {};

\path [->](A)edge node{}(B);
\path [->](B)edge node{}(C);
\path [->](D)edge node{}(E);
\path [->](E)edge node{}(F);

\path [->](G)edge node{}(H);
\path [->](H)edge node{}(I);
\path [->](J)edge node{}(K);
\path [->](K)edge node{}(L);

\path [->](G)edge node{}(A);
\path [->](H)edge node{}(B);
\path [->](I)edge node{}(C);
\path [->](J)edge node{}(D);
\path [->](K)edge node{}(E);
\path [->](L)edge node{}(F);
\end{tikzpicture}
\caption{Ladder graph $L_n$}
\label{ladder graph}
\end{figure}

Let $S(\G)$ denote the set of all spanning trees of the underlying graph of $\G$ and $s(\G):=\#S(\G)$. The graph obtained from $\G$ by contracting edge $e$ is denoted by $\G/e$. It is known from \cite[p.35]{mr} that $s(L_n)=4s(L_{n-1})-s(L_{n-2})$ for all $n\geq 2$. Since, $s(L_0)=0$ and $s(L_1)=1$, we observe that $s(L_n)=s_n$ for all $n\geq0$.

Let $Q=[q_{u,e}]$ be the incidence matrix of $L_n$ with the orientation given in Fig. \ref{ladder graph} whose rows and columns are indexed by ordered sets
\[V(L_n)=\{u_1^+,\ldots,u_n^+,u_1^-,\ldots,u_n^-\}\]
and
\[E(L_n)=\{f_1,\ldots,f_n,e_1^+\ldots,e_{n-1}^+,e_1^-\ldots,e_{n-1}^-\},\]
respectively. Also, let 
\begin{equation}\label{matrix}
H=\begin{pmatrix} 
B& -B \\
C & D \\
D&C
\end{pmatrix},
\end{equation}
be the $(3n-2)\times 2n$ matrix defined as follows: The rows and columns of $H$ are indexed by ordered sets $E(L_n)$ and $V(L_n)$, respectively. Let $B=[b_{i,j}]$ be the $n\times n$ symmetric matrix given by $b_{i,j}=a_i a_{n-j+1}/2s_n$, and $C=[c_{i,j}]$ and $D=[d_{i,j}]$ be the $(n-1)\times n$ matrices given by
\begin{equation*}
c_{i,j}=\begin{cases}
\delta_{i,j}-\frac{i}{2n}-\frac{a_{n-j+1}s_i}{2s_n},&i\leq j,\\
\frac{1}{2}-\frac{i}{2n}+\frac{a_js_{n-i}}{2s_n},&i>j,
\end{cases}
\end{equation*}
and 
\begin{equation*}
d_{i,j}=\begin{cases}
-\frac{i}{2n}+\frac{a_{n-j+1}s_i}{2s_n},&i\leq j,\\
\frac{1}{2}-\frac{i}{2n}-\frac{a_js_{n-i}}{2s_n},&i>j,
\end{cases}
\end{equation*}
respectively. In the following series of lemmas, we provide the machinery to prove that $H$ is the Moore-Penrose inverse of $Q$.
\begin{lemma}\label{ladder graph: H1=0}
$H\1=0$.
\end{lemma}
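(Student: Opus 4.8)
The plan is to exploit the block structure of $H$ recorded in \eqref{matrix}. Write $\1$ (of length $2n$) as the concatenation of two copies of the all-ones vector $\1_n$ of length $n$. Then
\[H\1=\begin{pmatrix} B\1_n-B\1_n\\ C\1_n+D\1_n\\ D\1_n+C\1_n\end{pmatrix}=\begin{pmatrix} 0\\ (C+D)\1_n\\ (C+D)\1_n\end{pmatrix},\]
so the first block of $H\1$ vanishes automatically, and the claim reduces to showing that $(C+D)\1_n=0$, i.e. that every row of $C+D$ sums to zero.

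Next I would compute the entrywise sum $c_{i,j}+d_{i,j}$ directly from the given formulas. The key point is that the terms involving the sequences $a_\bullet$ and $s_\bullet$ occur with opposite signs in $C$ and in $D$, so they cancel: for $i\le j$ one obtains $c_{i,j}+d_{i,j}=\delta_{i,j}-\tfrac{i}{n}$, and for $i>j$ one obtains $c_{i,j}+d_{i,j}=1-\tfrac{i}{n}$. In particular none of the identities from Lemma \ref{sequences} are needed at this stage.

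Finally, fix a row index $i\in\{1,\ldots,n-1\}$ and sum over $j\in\{1,\ldots,n\}$: the $j<i$ terms contribute $(i-1)\bigl(1-\tfrac{i}{n}\bigr)$, while the $j\ge i$ terms contribute $1-(n-i+1)\tfrac{i}{n}$, the isolated $1$ coming from $\delta_{i,i}$. Adding these yields $i-\tfrac{i}{n}\bigl((i-1)+(n-i+1)\bigr)=i-i=0$, which finishes the proof. There is no genuine obstacle here; the only thing that needs care is keeping the case split $i\le j$ versus $i>j$ straight together with the Kronecker delta, and remembering that $\1$ has length $2n$ so the first block is the trivial cancellation $B\1_n-B\1_n$.
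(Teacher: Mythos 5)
Your proof is correct and follows essentially the same route as the paper: both exploit the block structure of $H$ to reduce the claim to $(C+D)\1_n=0$, which the paper dismisses as obvious and you verify explicitly (correctly) by cancelling the $a_\bullet$, $s_\bullet$ terms and summing the remaining $\delta_{i,j}-\tfrac{i}{n}$ and $1-\tfrac{i}{n}$ contributions over each row.
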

\begin{proof}
It is obvious from the definition that $(C+D)\1=0$. Thus 
\[H\1=\begin{bmatrix}
(B-B)\1\\
(C+D)\1\\
(D+C)\1
\end{bmatrix}
=0,\]
as required
\end{proof}
\begin{lemma}\label{ladder graph: QH=I-J/2n}
$QH=I-\frac{1}{2n}J$.
\end{lemma}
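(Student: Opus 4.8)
The plan is to verify the matrix identity $QH = I - \tfrac{1}{2n}J$ by direct block computation, exploiting the block structures of both $Q$ and $H$. Write the incidence matrix in block form $Q = \begin{pmatrix} F & E & 0 \\ -F & 0 & E \end{pmatrix}$, where $F$ is the $n \times n$ identity (recording the spokes $f_i = (u_i^+, u_i^-)$), and $E$ is the $n \times (n-1)$ incidence matrix of the oriented path on $n$ vertices, so that $(E)_{i,j} = \delta_{i,j} - \delta_{i-1,j}$. Correspondingly $H$ has the three row-blocks $(B \mid -B)$, $(C \mid D)$, $(D \mid C)$ of sizes $n$, $n-1$, $n-1$. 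Multiplying, the $(+,+)$ block of $QH$ is $FB + EC$, the $(+,-)$ block is $-FB + ED$, the $(-,+)$ block is $-FB + ED$, and the $(-,-)$ block is $FB + EC$ — so by symmetry it suffices to compute $FB + EC$ and $-FB + ED$, and then check that the first equals $I_n - \tfrac{1}{2n}J_n$ on the diagonal blocks and the second equals $-\tfrac{1}{2n}J_n$ on the off-diagonal blocks.

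Next I would carry out the two computations entrywise. For $FB + EC = B + EC$: since $(EC)_{i,j} = c_{i,j} - c_{i-1,j}$ (with the convention that the $c_{0,j}$ and $c_{n,j}$ rows are absorbed into the path's endpoints), the key is that the telescoping differences $c_{i,j} - c_{i-1,j}$ collapse the piecewise formula for $c$ into something matching $\delta_{i,j} - \tfrac{1}{2n} - b_{i,j}$. Here the term $-i/(2n)$ in $c_{i,j}$ contributes $-1/(2n)$ after differencing, producing the $J$-term, while the terms involving $a_{n-j+1}s_i/(2s_n)$ and $a_j s_{n-i}/(2s_n)$ must recombine (using $2s_i - 2s_{i-1} = 2a_i$, i.e. the definition of $s$, and the product identity $b_{i,j} = a_i a_{n-j+1}/2s_n$) to reproduce $-b_{i,j}$; the Kronecker-delta jump in $c$ at $i = j$ supplies the identity part. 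The analogous telescoping of $d_{i,j} - d_{i-1,j}$ must cancel the $B$ coming from $-FB$ and again leave only $-1/(2n)$ per entry. The one subtlety to handle carefully is the boundary rows $i = 1$ and $i = n$ of the product with $E$: the first and last rows of $EC$ equal $c_{1,j}$ and $-c_{n-1,j}$ respectively (no difference), so I must check the claimed identities separately there, using $s_0 = 0$, $a_0 = 1$, and $s_n = (a_{n+1}-a_n)/2$ from Lemma~\ref{sequences}.

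The main obstacle will be the bookkeeping in the off-diagonal case $i \le j$ versus $i > j$ when computing $c_{i,j} - c_{i-1,j}$ and $d_{i,j} - d_{i-1,j}$: the difference is "clean" when $i-1$ and $i$ lie on the same side of $j$, but the index $i = j+1$ straddles the two cases, and one must check that the piecewise formulas were designed so that the difference there still equals the right quantity (this is precisely where the $\tfrac12$ in the $i > j$ branch of $c$ and $d$ is calibrated). I expect this crossover verification, together with the two boundary rows, to be the only genuinely delicate points; everything else reduces to invoking the definitions of $\{a_n\}$, $\{s_n\}$ and the identities \eqref{s_n=a_1+...+a_n}, \eqref{2s_n=a_(n+1)-a_n} already proved in Lemma~\ref{sequences}.
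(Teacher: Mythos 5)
Your block computation is the paper's own proof written in matrix notation: the entry $(B+EC)_{i,j}=b_{i,j}+c_{i,j}-c_{i-1,j}$ (with your boundary conventions $c_{0,j}=c_{n,j}=0$) is exactly the expression the paper evaluates row by row, and the telescoping, the crossover at $i=j+1$, and the boundary rows you flag are precisely where the identities \eqref{s_n=a_1+...+a_n}, \eqref{2s_n=a_(n+1)-a_n}, and \eqref{s_n=a_is_(n-i)+a_(n-i+1)s_i} get used. The approach is essentially the same and the outlined computations do go through.
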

\begin{proof}
Let $\delta'_{i,j}:=1-\delta_{i,j}$. For $1\leq i\leq j\leq n$, we have
\begin{align*}
(QH)_{u_i^+,u_j^+}&=b_{i,j}-\delta'_{i,1}c_{i-1,j}+\delta'_{i,n}c_{i,j}\\
&=\frac{a_ia_{n-j+1}}{2s_n}-\delta'_{i,1}\parn{-\frac{i-1}{2n}-\frac{a_{n-j+1}s_{i-1}}{2s_n}}+\delta'_{i,n}\parn{\delta_{i,j}-\frac{i}{2n}-\frac{a_{n-j+1}s_k}{2s_n}}\\
&=\delta_{i,j}-\frac{1}{2n},
\end{align*}
by using equation \eqref{s_n=a_1+...+a_n} when $i=n$. Also, an analogous argument yields 
\[(QH)_{u_j^+,u_i^+}=(QH)_{u_i^-,u_j^-}=(QH)_{u_j^-,u_i^-}=\delta_{i,j}-\frac{1}{2n}.\]
Similarly, by discussing the cases where $i=1$, $i=n$, and $i\neq 1,n$ and either $i\leq j$, $i=j+1$, and $i>j+1$, we obtain
\[(QH)_{u_i^+,u_j^-}=(QH)_{u_i^-,u_j^+}=-b_{i,j}-\delta'_{1,i}d_{i-1,j}+\delta'_{i,n}d_{i,j}=-\frac{1}{2n}\]
for all $1\leq i,j\leq n$. The proof is complete.
\end{proof}
\begin{lemma}\label{ladder graph: HQ symmetric}
$HQ$ is a symmetric matrix.
\end{lemma}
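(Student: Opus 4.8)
The plan is to exploit the block structures of $Q$ and $H$. With the vertex and edge orderings fixed above (top vertices first; then spokes, then top path edges, then bottom path edges), the incidence matrix takes the form
\[Q=\begin{pmatrix} I_n & N & 0\\ -I_n & 0 & N\end{pmatrix},\]
where $N$ is the $n\times(n-1)$ incidence matrix of the directed path $u_1\to\cdots\to u_n$, that is, $N_{i,k}=\delta_{i,k}-\delta_{i,k+1}$. Multiplying out against $H$ gives
\[HQ=\begin{pmatrix} B & -B\\ C & D\\ D & C\end{pmatrix}\begin{pmatrix} I_n & N & 0\\ -I_n & 0 & N\end{pmatrix}=\begin{pmatrix} 2B & BN & -BN\\ C-D & CN & DN\\ D-C & DN & CN\end{pmatrix}.\]
Since $B$ is symmetric, comparing the nine blocks of $HQ$ with those of $(HQ)'$ shows that $HQ$ is symmetric if and only if
\[\text{(i) }N'B=C-D,\qquad\text{(ii) }CN=(CN)',\qquad\text{(iii) }DN=(DN)'.\]

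Each of (i)--(iii) reduces to a routine computation with first differences of the explicit formulas for $b_{i,j},c_{i,j},d_{i,j}$, since $(MN)_{i,k}=M_{i,k}-M_{i,k+1}$ and $(N'M)_{k,j}=M_{k,j}-M_{k+1,j}$ for any conformable matrix $M$. For (i), I would compute $b_{k,j}-b_{k+1,j}$ using $a_{k+1}-a_k=2s_k$ (equation \eqref{2s_n=a_(n+1)-a_n}) and compare it with $c_{k,j}-d_{k,j}$, distinguishing $k<j$, $k=j$, and $k>j$; the off-diagonal cases match at once, while the case $k=j$ collapses precisely because of the identity $s_n=a_js_{n-j}+a_{n-j+1}s_j$ from \eqref{s_n=a_is_(n-i)+a_(n-i+1)s_i}. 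For (ii) and (iii), since the asserted symmetry is unchanged under swapping the two indices, it is enough to treat $i<k$; a short computation using \eqref{2s_n=a_(n+1)-a_n} gives $(CN)_{i,k}=-s_is_{n-k}/s_n$ and $(DN)_{i,k}=s_is_{n-k}/s_n$, and one checks that $(CN)_{k,i}$ and $(DN)_{k,i}$ evaluate to the same two quantities.

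The only delicate point — and the place the argument could go wrong if handled carelessly — is the boundary case $i+1=k$ in (ii) and (iii). There the entry $c_{k,i+1}$ equals the diagonal entry $c_{k,k}$, which the piecewise definition computes from the branch $i\le j$ (so it carries the term $\delta_{k,k}=1$ together with $-a_{n-k+1}s_k/2s_n$) rather than from the branch $i>j$; one must invoke \eqref{s_n=a_is_(n-i)+a_(n-i+1)s_i} once more to see that $c_{k,k}$ nevertheless agrees with the value the $i>j$ formula would give, namely $\tfrac12-\tfrac{k}{2n}+a_ks_{n-k}/2s_n$, and similarly for $d_{k,k}$. Granting this, $c_{k,i+1}$ and $d_{k,i+1}$ admit uniform expressions over the whole range $1\le i<k$, the first differences telescope cleanly, and (ii), (iii) follow. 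Combining (i)--(iii) yields the symmetry of $HQ$.
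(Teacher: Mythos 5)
Your proposal is correct and is essentially the paper's proof in tidier packaging: writing $Q$ in the block form $\begin{pmatrix} I_n & N & 0\\ -I_n & 0 & N\end{pmatrix}$ reduces the claim to the three identities $N'B=C-D$, $CN=(CN)'$, $DN=(DN)'$, which are exactly the entrywise equalities the paper verifies (its computations of $(HQ)_{f_j,e_i^+}$ versus $(HQ)_{e_i^+,f_j}$ and of the various $(HQ)_{e_i^{\pm},e_j^{\pm}}$), carried out with the same first differences and the same identities \eqref{2s_n=a_(n+1)-a_n} and \eqref{s_n=a_is_(n-i)+a_(n-i+1)s_i}. Your handling of the boundary case $i+1=k$ — reconciling the diagonal entries $c_{k,k}$, $d_{k,k}$ with the $i>j$ branch via \eqref{s_n=a_is_(n-i)+a_(n-i+1)s_i} — is precisely the case $i=j+1$ in the paper's case-by-case analysis, so the argument goes through as written.
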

\begin{proof}
Let $1\leq i,j\leq n$. Since $B$ is a symmetric matrix, 
\[(HQ)_{f_i,f_j}=2b_{i,j}=2b_{j,i}=(HQ)_{f_j,f_i}.\] 
From the definition of $H$, we get
\[(HQ)_{e_i^+,f_j}=-(HQ)_{e_i^-,f_j}=c_{i,j}-d_{i,j}=\begin{cases}
\delta_{i,j}-\frac{a_{n-j+1}s_i}{s_n},&i\leq j,\\
\frac{a_js_{n-i}}{s_n},&i>j,
\end{cases}\]
and
\[(HQ)_{f_j,e_i^+}=-(HQ)_{f_j,e_i^-}=b_{j,i}-b_{j,i+1}=\begin{cases}
\frac{a_ia_{n-j+1}-a_{i+1}a_{n-j+1}}{2s_n},&i<j,\\
\frac{a_ja_{n-i+1}-a_ja_{n-i}}{2s_n},&i\geq j.
\end{cases}\]
Utilizing equations \eqref{s_n=a_1+...+a_n}, \eqref{2s_n=a_(n+1)-a_n}, and \eqref{s_n=a_is_(n-i)+a_(n-i+1)s_i} and a simple case-by-case analysis, it yields $(HQ)_{e_i^+,f_j}=(HQ)_{f_j, e_i}$ and $(HQ)_{e_i^-,f_j}=(HQ)_{f_j, e_i^-}$. On the other hand,
\[(HQ)_{e_i^+,e_j^+}=c_{i,j}-c_{i,j+1}=\begin{cases}
\delta_{i,j}+\frac{(a_{n-j}-a_{n-j+1})s_i}{2s_n},&i\leq j,\\
\frac{(a_j-a_{j+1})s_{n-i}}{2s_n},&i>j+1,\\
-\frac{1}{2}+\frac{a_js_{n-i}+a_{n-j}s_i}{2s_n},&i=j+1,
\end{cases}\]

\[(HQ)_{e_i^+,e_j^-}=d_{i,j}-d_{i,j+1}=\begin{cases}
\frac{(a_{n-j+1}-a_{n-j})s_i}{2s_n},&i\leq j,\\
\frac{1}{2}-\frac{a_js_{n-i}+a_{n-j}s_i}{2s_n},&i=j+1,\\
\frac{(a_{j+1}-a_j)s_{n-i}}{2s_n},&i>j+1,
\end{cases}\]

\[(HQ)_{e_j^-,e_i^+}=c_{j,i}-c_{j,i+1}=\begin{cases}
\frac{(a_i-a_{i+1})s_{n-j}}{2s_n},&j>i+1,\\
-\frac{1}{2}+\frac{a_is_{n-j}+a_{n-i}s_j}{2s_n},&j=i+1,\\
\frac{(a_{n-i}-a_{n-i+1})s_j}{2s_n},&i\geq j
\end{cases}\]
and
\[(HQ)_{e_i^-,e_j^-}=d_{i,j}-d_{i,j+1}=\begin{cases}
\frac{(a_{n-j+1}-a_{n-j})s_i}{2s_n},&i\leq j,\\
\frac{1}{2}-\frac{a_js_{n-i}+a_{n-j}s_i}{2s_n},&i=j+1,\\
\frac{(a_{j+1}-a_j)s_{n-i}}{2s_n},&i>j+1,
\end{cases}\]
from which the result follows.
\end{proof}
\begin{theorem}\label{ladder graph: mpinv}
Let $L_n$ be a ladder graph with incident matrix $Q$. Then $Q^+=H$.
\end{theorem}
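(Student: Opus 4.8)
The plan is to verify directly that the matrix $H$ satisfies all four Moore-Penrose axioms with respect to $Q$, using the lemmas already proved. Recall that $Q^+$ is characterized as the unique matrix $X$ with $QXQ=Q$, $XQX=X$, $(QX)'=QX$, and $(XQ)'=XQ$. By Lemma \ref{ladder graph: HQ symmetric} the fourth axiom $(HQ)'=HQ$ already holds, so it remains to establish the other three. The key structural fact I would exploit is that $L_n$ is connected, so $\ker Q' $ (acting on the vertex space) is exactly the span of $\1$, and $Q$ has rank $2n-1$; dually, the column space of $Q$ is $\1^\perp$.

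First I would handle the symmetry of $QH$. Lemma \ref{ladder graph: QH=I-J/2n} gives $QH=I-\tfrac{1}{2n}J$, which is manifestly symmetric, so the third axiom is immediate. Next, for $QHQ=Q$: since $QH=I-\tfrac{1}{2n}J$ we get $QHQ=Q-\tfrac{1}{2n}JQ$, and $JQ=\1(\1'Q)$; but each column of $Q$ is an incidence vector of an edge, hence sums to $0$, so $\1'Q=0$ and therefore $QHQ=Q$, giving the first axiom. For the second axiom $HQH=H$: write $HQH=(HQ)H$, and alternatively $HQH=H(QH)=H(I-\tfrac{1}{2n}J)=H-\tfrac{1}{2n}HJ$, so it suffices to show $HJ=0$, i.e.\ that every row of $H$ sums to zero. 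But $HJ = (H\1)\1'$, and $H\1=0$ is exactly Lemma \ref{ladder graph: H1=0}. Hence $HQH=H$.

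Having checked all four axioms, uniqueness of the Moore-Penrose inverse forces $Q^+=H$. I would present the argument in precisely this order: cite Lemma \ref{ladder graph: H1=0} for $H\1=0$, Lemma \ref{ladder graph: QH=I-J/2n} for the value of $QH$ (which instantly yields axioms (3) and, via $\1'Q=0$, axiom (1)), Lemma \ref{ladder graph: HQ symmetric} for axiom (4), and the observation $HQH=H-\tfrac{1}{2n}HJ=H$ for axiom (2). No further computation is needed.

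The genuine obstacle in this theorem is not its proof but the three preceding lemmas, whose verification rests on the sequence identities of Lemma \ref{sequences} and on organizing the many index cases ($i<j$, $i=j$, $i=j+1$, $i>j+1$, together with the boundary cases $i=1$ and $i=n$) in the definitions of $B$, $C$, and $D$. Once those are in hand, the present theorem is a short formal consequence: the only substantive remark needed is that the columns of the incidence matrix sum to zero, which is what converts $QH=I-\tfrac{1}{2n}J$ into the exact Penrose relations rather than approximate ones, and which reflects the fact that $I-\tfrac{1}{2n}J$ is the orthogonal projection onto the column space $\1^\perp$ of $Q$.
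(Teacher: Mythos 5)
Your proposal is correct and follows essentially the same route as the paper: both verify the four Penrose axioms by combining Lemma \ref{ladder graph: QH=I-J/2n} (giving symmetry of $QH$ and, via $\1'Q=0$, the identity $QHQ=Q$), Lemma \ref{ladder graph: HQ symmetric} (symmetry of $HQ$), and Lemma \ref{ladder graph: H1=0} (giving $HQH=H$). Your write-up is in fact slightly more explicit than the paper's about why $H\1=0$ yields $HQH=H$, but the argument is the same.
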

\begin{proof}
By Lemmas \ref{ladder graph: QH=I-J/2n} and \ref{ladder graph: HQ symmetric}, $QH$ and $HQ$ are symmetric. On the other hand, $QHQ=Q$ for $\1'Q=0$. Since $HQH=H$ by Lemmas \ref{ladder graph: H1=0} and \ref{ladder graph: QH=I-J/2n}, it follows that $H=Q^+$.
\end{proof}

The following lemma will be used frequently in the reminder of this section.
\begin{lemma}[{\cite[Corollary 3.2]{aa-rbb-mfdg}}]\label{resistance}
Let $\G$ be a connected graph and $P$ be a path of length $d$ between vertices $u, v\in V(\G)$. If all edges in $P$ have the same direction from $u$ to $v$ then,
\[r(u,v)=\sum _{e\in E(P)}(q^+_{e,u}-q^+_{e,v}),\]
where $r(u,v)$ is the resistance distance between vertices $u$ and $v$.
\end{lemma}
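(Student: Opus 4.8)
The plan is to reduce the statement to the single matrix identity $Q^{+}=Q'L^{+}$ relating the Moore--Penrose inverses of the incidence matrix $Q$ and the Laplacian $L$ of $\G$, and then to exploit the telescoping behaviour of a directed path.

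First I would record that $Q^{+}=Q'L^{+}$. Put $X:=Q'L^{+}$. Since $L=QQ'$ is symmetric, $L^{+}$ is symmetric and $LL^{+}$ is the orthogonal projector onto $\mathrm{col}(L)=\mathrm{col}(QQ')=\mathrm{col}(Q)$, so that $LL^{+}Q=Q$. Using this together with the Penrose relations for $L$, one verifies the four defining equations for $X$: $QXQ=QQ'L^{+}Q=LL^{+}Q=Q$; $XQX=Q'L^{+}QQ'L^{+}=Q'L^{+}LL^{+}=Q'L^{+}=X$; $(QX)'=(QQ'L^{+})'=(LL^{+})'=LL^{+}=QX$; and $(XQ)'=(Q'L^{+}Q)'=Q'L^{+}Q=XQ$. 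Hence $X=Q^{+}$, and, taking transposes, also $L^{+}=(Q^{+})'Q^{+}$.

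Next I would express the right-hand side of the claimed formula through entries of $L^{+}$, which is how $r(u,v)$ is defined in the introduction. Write $P$ as $u=w_{0},w_{1},\ldots,w_{d}=v$, with $e_{i}$ the edge directed from $w_{i-1}$ to $w_{i}$. By the sign convention for $Q$, the $e_{i}$-row of $Q'$ carries $+1$ in column $w_{i-1}$, $-1$ in column $w_{i}$, and $0$ elsewhere, so for every vertex $x$ one has $q^{+}_{e_{i},x}=(Q'L^{+})_{e_{i},x}=L^{+}_{w_{i-1},x}-L^{+}_{w_{i},x}$. Writing $g_{i}:=L^{+}_{w_{i},u}-L^{+}_{w_{i},v}$, this gives $q^{+}_{e_{i},u}-q^{+}_{e_{i},v}=g_{i-1}-g_{i}$, and summing over $i=1,\ldots,d$ the sum telescopes:
\[\sum_{e\in E(P)}(q^{+}_{e,u}-q^{+}_{e,v})=\sum_{i=1}^{d}(g_{i-1}-g_{i})=g_{0}-g_{d}=L^{+}_{u,u}+L^{+}_{v,v}-L^{+}_{u,v}-L^{+}_{v,u}=r(u,v).\]

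I do not expect a genuine obstacle; the only step needing care is $Q^{+}=Q'L^{+}$, whose one nontrivial ingredient is $\mathrm{col}(Q)=\mathrm{col}(QQ')$, valid for every real matrix (connectivity of $\G$, present in the hypothesis, is thus needed only so that $r(u,v)$ is finite). As a variant of the last two paragraphs one can skip the telescoping: from $L^{+}=(Q^{+})'Q^{+}$ one gets $r(u,v)=\sum_{e\in E(\G)}(q^{+}_{e,u}-q^{+}_{e,v})^{2}=\|z\|^{2}$ where $z:=Q^{+}(e_{u}-e_{v})$ and $e_{w}$ denotes the characteristic vector of $w$; writing $c_{P}$ for the characteristic vector of the consistently oriented edge set $E(P)$ one has $Qc_{P}=e_{u}-e_{v}$, so $z=Q^{+}Qc_{P}$ is the orthogonal projection of $c_{P}$ onto $\mathrm{row}(Q)=(\ker Q)^{\perp}$, whence $c_{P}-z\in\ker Q$ is orthogonal to $z$ and $\sum_{e\in E(P)}(q^{+}_{e,u}-q^{+}_{e,v})=c_{P}'z=\|z\|^{2}=r(u,v)$. (Equivalently, $z$ is the unit electrical current from $u$ to $v$ and $\|z\|^{2}$ its energy, which is $r(u,v)$ by Thomson's principle.)
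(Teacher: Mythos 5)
Your argument is correct, and both routes you sketch go through. The key identity $Q^{+}=Q'L^{+}$ is verified properly (the only nontrivial ingredient, $LL^{+}Q=Q$, does follow from $\mathrm{col}(Q)=\mathrm{col}(QQ')$), the sign convention you use for the $e_i$-row of $Q'$ matches the paper's definition of the incidence matrix, and the telescoping sum correctly reproduces $L^{+}_{u,u}+L^{+}_{v,v}-L^{+}_{u,v}-L^{+}_{v,u}=r(u,v)$. Note, however, that the paper does not prove this lemma at all: it is imported verbatim as Corollary 3.2 of the cited reference of Azimi, Bapat, and Farrokhi, so there is no in-paper proof to compare against. Your write-up therefore supplies a self-contained justification of a black-box ingredient, which is a genuine addition; the mechanism you use (reducing $q^{+}_{e,x}$ to differences of entries of $L^{+}$ via $Q^{+}=Q'L^{+}$, then telescoping along the consistently oriented path) is the standard one in this circle of ideas and is almost certainly how the cited corollary is obtained. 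Your second variant, identifying $Q^{+}(e_u-e_v)$ with the unit current and $c_P'z=\|z\|^2$ with its energy, is a nice bonus in that it explains why the hypothesis that $P$ be consistently oriented from $u$ to $v$ is exactly what makes $Qc_P=e_u-e_v$, but it is not needed for the statement as used in the paper.
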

\begin{corollary}
If $\B=2s(L_n)B$. Then $\B$ has eigenvalue $s(L_n)$ with associated eigenvector $\1$. Also, the diagonal entries of $\B$ satisfy the equations 
\[\B_{i,i}=s(L_n/f_i)\]
for $i=1, \ldots, n$. 
\end{corollary}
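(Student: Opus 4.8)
The plan is to read the displayed equation as the definition $\B:=2s(L_n)B$ and to establish the two assertions in turn. Since it was already observed that $s(L_n)=s_n$, and since $B$ is the symmetric matrix with $b_{i,j}=a_ia_{n-j+1}/2s_n$ for $i\le j$ and $b_{i,j}=b_{j,i}$ otherwise, we have $\B_{i,j}=a_ia_{n+1-j}$ for $i\le j$ and, in particular, $\B_{i,i}=a_ia_{n+1-i}$. The eigenvector claim will follow from a direct evaluation of the row sums of $\B$, while the formula for the diagonal will come from identifying $s(L_n/f_i)$ with $s(L_n)$ times the resistance distance across the spoke $f_i$ and then reading this resistance off from $Q^+=H$.

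For the first assertion I would fix $i$ and split the $i$-th row sum of $\B$ at the diagonal, using the symmetry of $B$:
\[\sum_{j=1}^{n}\B_{i,j}=a_{n+1-i}\sum_{j=1}^{i}a_j+a_i\sum_{j=i+1}^{n}a_{n+1-j}=a_{n+1-i}s_i+a_is_{n-i},\]
where the first sum is $s_i$ by \eqref{s_n=a_1+...+a_n} and the second sum becomes $s_{n-i}$ after reindexing. By \eqref{s_n=a_is_(n-i)+a_(n-i+1)s_i} with $k=i$, the right-hand side equals $s_n$, so $\B\1=s_n\1=s(L_n)\1$, which is the first claim.

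For the second assertion I would invoke the classical fact that, for an edge $e=uv$ of a connected graph $\G$ whose edges are unit resistors, $s(\G/e)=s(\G)\,r(u,v)$ (equivalently, $r(u,v)$ is the probability that $e$ belongs to a uniformly random spanning tree; this is a standard consequence of the matrix-tree theorem). Taking $\G=L_n$ and $e=f_i=(u_i^+,u_i^-)$, and computing $r(u_i^+,u_i^-)$ by Lemma \ref{resistance} applied to the one-edge path $f_i$ together with $Q^+=H$ from Theorem \ref{ladder graph: mpinv}, I read off from the block form \eqref{matrix} that
\[r(u_i^+,u_i^-)=q^+_{f_i,u_i^+}-q^+_{f_i,u_i^-}=b_{i,i}-(-b_{i,i})=2b_{i,i}=\frac{a_ia_{n+1-i}}{s_n}.\]
Hence $s(L_n/f_i)=s(L_n)\cdot a_ia_{n+1-i}/s_n=a_ia_{n+1-i}=\B_{i,i}$.

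The computation is essentially routine; the only points requiring care are the bookkeeping of the block structure of $H$ (so that indeed $H_{f_i,u_i^+}=b_{i,i}$ and $H_{f_i,u_i^-}=-b_{i,i}$) and the invocation of $s(\G/e)=s(\G)\,r(u,v)$, which is where Theorem \ref{ladder graph: mpinv} and Lemma \ref{resistance} actually do the work. If one prefers to avoid that classical identity, an alternative route to the second assertion is to note that contracting $f_i$ realizes $L_n/f_i$ as the one-point amalgam of $L_i/f_i$ and $L_{n-i+1}/f_1$, so that $s(L_n/f_i)=s(L_i/f_i)\,s(L_{n-i+1}/f_1)$, and then to prove $s(L_m/f_1)=a_m$ separately by a $(4,1)$-recursion with base cases $s(L_1/f_1)=1$ and $s(L_2/f_1)=3$; the resistance argument above is shorter and better matches the surrounding development.
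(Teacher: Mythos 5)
Your proof is correct and follows essentially the same route as the paper: the diagonal identity is obtained exactly as in the text, via the classical fact $s(L_n/f_i)=s(L_n)\,r(u_i^+,u_i^-)$ combined with Lemma \ref{resistance}, Theorem \ref{ladder graph: mpinv}, and the block form of $H$. The only difference is in the eigenvector claim, where you evaluate the row sums of $\B$ directly from identities \eqref{s_n=a_1+...+a_n} and \eqref{s_n=a_is_(n-i)+a_(n-i+1)s_i}, whereas the paper obtains the same value $\sum_j b_{i,j}=\tfrac12$ by citing an external lemma of \cite{aa-rbb-ee} on sums of entries of $Q^+$; your version is self-contained and equally valid.
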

\begin{proof}
By \cite[Lemma 2.1]{aa-rbb-ee}
\[\sum_{i=1}^n q^+_{f_i,u_j^+}=1-\frac{n}{2n}=\frac{1}{2},\]
On the other hand, by Theorem \ref{ladder graph: mpinv},
\[\sum_{i=1}^n q^+_{f_i,u_j^+}=\frac{a_i\sum_{i\leq j}a_{n-j+1}+a_{n-i+1}\sum_{i>j}a_j}{2s(L_n)}\]
so that
\[s(L_n)=a_i\sum_{i\leq j}a_{n-j+1}+a_{n-i+1}\sum_{i>j}a_j=\sum_{j=1}^nb_{i,j},\]
that is $\B\1=s(L_n)\1$. Now, from Lemma \ref{resistance} and Theorem \ref{ladder graph: mpinv}, and the definition of the resistance distance between two vertices, we get
\[\frac{s(L_n/f_i)}{s(L_n)}=r(u_i^+,u_i^-)=2q^+_{f_i,u_i^+}=\frac{a_ia_{n-i+1}}{s(L_n)}\]
(see \cite[p. 133]{rbb}). Therefore
\begin{equation}\label{s(L_n/f_i)=a_ia_(n-i+1)}
s(L_n/f_i)=a_ia_{n-i+1}=\B_{i,i},
\end{equation}
as required.
\end{proof}

In \cite{zc} the author uses circus reduction to obtain the resistance distance and Kirchhoff index of ladder graphs. Here we use the Moore-Penrose inverses of Laplacian matrices of ladder graphs to drive new formulas for resistance distance and Kirchhoff index of ladder graphs.
\begin{theorem}\label{ladder graph: resistance distance matrix}
Let $R=[r(u,v)]$ be the resistance distance matrix of the ladder graph $L_n$ of order $2n$. Then
\[r(u_i^\varepsilon,u_j^{\varepsilon'})=\frac{d}{2}-\varepsilon\varepsilon'\frac{a_ia_{n-j+1}}{2s_n}+\alpha(i,j),\]
where $d=d(u_i^+,u_j^+)$, $\varepsilon,\varepsilon'=\pm1$, and $\alpha(i,j)=(s(L_n/f_i)+s(L_n/f_j))/4s_n$. 
\begin{proof}
Let $d=j-i=d(u_i^+,u_j^+)$ and $P:e_i^+,\ldots,e_{j-1}^+$ be the shortest path between $u_i^+$ and $u_j^+$, where $1\leq i<j\leq n$. Then
\[\sum_{k=i}^{j-1}q^+_{e_k^+,u_i^+}=\sum_{k=i}^{j-1}c_{k,i}=c_{i,i}+\sum_{k=i+1}^{j-1}c_{k,i},\]
Thus
\[\sum_{k=i}^{j-1}q^+_{e_k^+,u_i^+}=\parn{1-\frac{i}{2n}-\frac{a_{n-i+1}s_i}{2s_n}}+\sum_{k=i+1}^{j-1}\parn{\frac{1}{2}-\frac{k}{2n}+\frac{a_is_{n-k}}{2s_n}}.\]
By equations \eqref{2s_n=a_(n+1)-a_n} and \eqref{a_(n+1)-a_n=a_(i+1)a_(n-i+1)-a_ia_(n-i)},
\begin{align*}
\sum_{k=i}^{j} q^+_{e_k^+,u_i^+}&=\frac{d+1}{2}-\frac{id}{2n}-\frac{d(d-1)}{4n}+\frac{a_i(a_{n-i}-a_{n-j+1})-a_{n-i+1}(a_{i+1}-a_i)}{4s_n}\\
&=\frac{d+1}{2}-\frac{id}{2n}-\frac{d(d-1)}{4n}+\frac{a_n-a_{n+1}+a_ia_{n-i+1}-a_ia_{n-j+1}}{4s_n}\\
&=\frac{d+1}{2}-\frac{id}{2n}-\frac{d(d-1)}{4n}+\frac{-2s_n+s(L_n/f_i)-a_ia_{n-j+1}}{4s_n}\\
&=\frac{d}{2}-\frac{id}{2n}-\frac{d(d-1)}{4n}-\frac{a_ia_{n-j+1}}{4s_n}+\frac{s(L_n/f_i)}{4s_n}.
\end{align*}
The same argument for the vertex $u_j^+$ yields
\begin{align*}
\sum_{k=i}^{j-1} q^+_{e_k^+,u_j^+}=\sum_{k=i}^{j-1}c_{k,j}&=-\frac{id}{2n}-\frac{d(d-1)}{4n}+\frac{a_{n-j+1}(a_i-a_{d+i})}{4s_n}\\
&=-\frac{id}{2n}-\frac{d(d-1)}{4n}+\frac{a_ia_{n-j+1}}{4s_n}-\frac{s(L_n/f_j)}{4s_n}.
\end{align*}
Therefore,
\begin{align*}
r(u_i^+,u_j^+)=\frac{d}{2}-\frac{a_ia_{n-j+1}}{2s_n}+\frac{s(L_n/f_i)+s(L_n/f_j)}{4s_n}.
\end{align*}
From the block structure of $H$ it follows that $r(u_i^-,u_j^-)=r(u_i^+,u_j^+)$. Finally, from the equality
\begin{align*}
\sum_{k=i}^{j-1}q^+_{e_k^+,u_j^-}=\sum_{k=i}^{j-1}d_{k,j}&=\sum_{k=i}^{j-1}\parn{-\frac{k}{2n}+\frac{a_{n-j+1}s_k}{2s_n}}\\
&=-\frac{id}{2n}-\frac{d(d-1)}{4n}-\frac{a_ia_{n-j+1}}{4s_n}+\frac{s(L_n/f_j)}{4s_n}
\end{align*}
we obtain
\begin{align*}
r(u_j^-,u_i^+)=r(u_i^+,u_j^-)&=\sum_{k=i}^{j-1} q^+_{e_k^+,u_i^+}+b_{j,i}-\sum_{k=i}^{j-1}q^+_{e_k^+,u_j^-}+b_{j,j}\\
&=\frac{d}{2}+\frac{a_ia_{n-j+1}}{2s_n}+\frac{s(L_n/f_i)+s(L_n/f_j)}{4s_n},
\end{align*}
as required.
\end{proof}
\end{theorem}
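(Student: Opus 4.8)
The plan is to combine the closed form $Q^+=H$ from Theorem \ref{ladder graph: mpinv} with the path formula of Lemma \ref{resistance}, after first cutting the number of cases down to two using symmetry. The vertical flip $u_i^+\leftrightarrow u_i^-$ is a graph automorphism of $L_n$, hence an isometry for the resistance metric, so $r(u_i^-,u_j^-)=r(u_i^+,u_j^+)$ and $r(u_i^-,u_j^+)=r(u_i^+,u_j^-)$; since $r$ is symmetric I may also assume $i\le j$. The degenerate case $i=j$ is already covered: $r(u_i^+,u_i^+)=0$, and by the corollary above $r(u_i^+,u_i^-)=s(L_n/f_i)/s_n=a_ia_{n-i+1}/s_n$, both of which match the asserted formula with $d=0$. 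So it remains to treat $i<j$ in the two cases $(\varepsilon,\varepsilon')=(+,+)$ and $(+,-)$.

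For $(+,+)$ I would apply Lemma \ref{resistance} to the geodesic $P\colon e_i^+,e_{i+1}^+,\ldots,e_{j-1}^+$, all of whose edges point from $u_i^+$ toward $u_j^+$; this gives $r(u_i^+,u_j^+)=\sum_{k=i}^{j-1}(q^+_{e_k^+,u_i^+}-q^+_{e_k^+,u_j^+})=\sum_{k=i}^{j-1}(c_{k,i}-c_{k,j})$ by Theorem \ref{ladder graph: mpinv}. The two sums are then read off the explicit formula for $c_{i,j}$: in $\sum_k c_{k,i}$ only the term $k=i$ uses the diagonal ($k\le i$) branch and the rest use the $k>i$ branch, while every term of $\sum_k c_{k,j}$ uses the $k\le j$ branch with $\delta_{k,j}=0$. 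Each sum then breaks into an arithmetic part $\sum k$, producing the $\frac{id}{2n}$ and $\frac{d(d-1)}{4n}$ terms that cancel between the two sums, and partial sums of $s_k$ and $s_{n-k}$, which I would collapse by telescoping via \eqref{2s_n=a_(n+1)-a_n}. The residual products of $a$'s are simplified using \eqref{a_(n+1)-a_n=a_(i+1)a_(n-i+1)-a_ia_(n-i)} (to recognise $a_{n+1}-a_n=2s_n$ and to convert mixed products), \eqref{s_n=a_is_(n-i)+a_(n-i+1)s_i}, and the identity $s(L_n/f_i)=a_ia_{n-i+1}$ from \eqref{s(L_n/f_i)=a_ia_(n-i+1)}; together these collapse the expression to $\frac{d}{2}-\frac{a_ia_{n-j+1}}{2s_n}+\frac{s(L_n/f_i)+s(L_n/f_j)}{4s_n}$, the claim with $\varepsilon\varepsilon'=1$.

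For $(+,-)$ I would run the same argument along the path $P'\colon e_i^+,\ldots,e_{j-1}^+,f_j$, whose edges are again all co-oriented from $u_i^+$ to $u_j^-$ since $f_j=(u_j^+,u_j^-)$. Lemma \ref{resistance} gives $r(u_i^+,u_j^-)=\sum_{k=i}^{j-1}(q^+_{e_k^+,u_i^+}-q^+_{e_k^+,u_j^-})+(q^+_{f_j,u_i^+}-q^+_{f_j,u_j^-})$. The first sum reuses $\sum_k q^+_{e_k^+,u_i^+}$ from the previous case together with a new sum $\sum_k q^+_{e_k^+,u_j^-}=\sum_k d_{k,j}$, computed by the identical telescoping bookkeeping; the rung term is $q^+_{f_j,u_i^+}-q^+_{f_j,u_j^-}=b_{j,i}+b_{j,j}=(a_ia_{n-j+1}+a_ja_{n-j+1})/2s_n$, using the symmetry of $B$ and the $(B,-B)$ block of $H$. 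Adding everything, the $\frac{id}{2n}$, $\frac{d(d-1)}{4n}$ and $\frac{a_ia_{n-j+1}}{4s_n}$ contributions cancel or combine, and one is left with $\frac{d}{2}+\frac{a_ia_{n-j+1}}{2s_n}+\frac{s(L_n/f_i)+s(L_n/f_j)}{4s_n}$, the claim with $\varepsilon\varepsilon'=-1$.

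I do not expect a conceptual obstacle: the content is entirely in the bookkeeping. The points that need care are (i) checking in each case that the chosen path can be taken with all edges co-oriented, so that Lemma \ref{resistance} applies with no sign corrections; (ii) selecting the correct branch of $c_{i,j}$ and $d_{i,j}$ for every index pair occurring in the summation range, including the boundary term $k=i$ and the empty-sum case $d=1$; and (iii) routing the surviving $a$-products through exactly the right combination of \eqref{2s_n=a_(n+1)-a_n}, \eqref{a_(n+1)-a_n=a_(i+1)a_(n-i+1)-a_ia_(n-i)}, \eqref{s_n=a_is_(n-i)+a_(n-i+1)s_i} and \eqref{s(L_n/f_i)=a_ia_(n-i+1)} so that the pieces telescope to $s(L_n/f_i)$, $s(L_n/f_j)$ and $a_ia_{n-j+1}$ rather than to some equivalent but uncollapsed form.
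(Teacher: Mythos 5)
Your proposal follows essentially the same route as the paper's proof: the same geodesic $P\colon e_i^+,\ldots,e_{j-1}^+$ (extended by the rung $f_j$ in the mixed case) fed into Lemma \ref{resistance}, the same branch-by-branch evaluation of $\sum c_{k,i}$, $\sum c_{k,j}$, $\sum d_{k,j}$ and the rung contribution $b_{j,i}+b_{j,j}$, and the same identities to collapse the result. The bookkeeping you outline is exactly what the paper carries out, so the plan is correct.
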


Utilizing Theorem \ref{ladder graph: resistance distance matrix}, we can compute the Kirchhoff index of ladder graphs.
\begin{corollary}
For any $n\geq1$,
\[Kf(L_n)=\frac{n^2}{3}\parn{n+1+\frac{a_n}{s_n}}.\]
\end{corollary}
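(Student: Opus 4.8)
The plan is to substitute the closed form from Theorem~\ref{ladder graph: resistance distance matrix} directly into the definition $Kf(L_n)=\tfrac12\sum_{v,w\in V(L_n)}r(v,w)$. Since $V(L_n)=\{u_i^\varepsilon:1\le i\le n,\ \varepsilon=\pm1\}$, this reads
\[Kf(L_n)=\frac12\sum_{\varepsilon,\varepsilon'=\pm1}\sum_{i,j=1}^n r(u_i^\varepsilon,u_j^{\varepsilon'}).\]
The first, and essentially the only, idea is that summing the three terms of $r(u_i^\varepsilon,u_j^{\varepsilon'})=\tfrac d2-\varepsilon\varepsilon'\tfrac{a_ia_{n-j+1}}{2s_n}+\alpha(i,j)$ over the four sign choices kills the middle one, because $\sum_{\varepsilon,\varepsilon'=\pm1}\varepsilon\varepsilon'=0$, whereas $d=d(u_i^+,u_j^+)=|i-j|$ and $\alpha(i,j)$ are sign-independent and each acquires a factor $4$. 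Hence
\[Kf(L_n)=\sum_{i,j=1}^n|i-j|+2\sum_{i,j=1}^n\alpha(i,j),\]
and it remains to evaluate these two double sums. The diagonal terms $i=j$ cause no trouble: the formula already gives $r(u_i^\varepsilon,u_i^\varepsilon)=0$, so they may be kept in.

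For the first sum I would use the elementary identity $\sum_{i,j=1}^n|i-j|=2\sum_{d=1}^{n-1}d(n-d)=\tfrac{n(n^2-1)}{3}$. For the second, recall $\alpha(i,j)=\bigl(s(L_n/f_i)+s(L_n/f_j)\bigr)/4s_n$ and $s(L_n/f_i)=a_ia_{n-i+1}$ by \eqref{s(L_n/f_i)=a_ia_(n-i+1)}; summing over one index and exploiting the symmetry in $i,j$ yields
\[\sum_{i,j=1}^n\alpha(i,j)=\frac{n}{2s_n}\sum_{i=1}^na_ia_{n-i+1}.\]
This is the point where the arithmetic of the sequences enters: identity \eqref{a_1a_n+...+a_na_1=(n+1)s_n+2na_n)/6} of Lemma~\ref{sequences} gives exactly $\sum_{i=1}^na_ia_{n-i+1}=\bigl((n+1)s_n+na_n\bigr)/3$.

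Assembling the pieces,
\[Kf(L_n)=\frac{n(n^2-1)}{3}+\frac{n\bigl((n+1)s_n+na_n\bigr)}{3s_n}=\frac13\Bigl(n(n^2-1)+n(n+1)+\frac{n^2a_n}{s_n}\Bigr),\]
and the collapse $n(n^2-1)+n(n+1)=n^2(n+1)$ produces the stated formula $Kf(L_n)=\tfrac{n^2}{3}\bigl(n+1+\tfrac{a_n}{s_n}\bigr)$. I do not foresee a real obstacle: once Theorem~\ref{ladder graph: resistance distance matrix} and the convolution identity \eqref{a_1a_n+...+a_na_1=(n+1)s_n+2na_n)/6} are available, this is pure bookkeeping; the only things to watch are correctly counting the sign multiplicities and selecting the right identity from Lemma~\ref{sequences}.
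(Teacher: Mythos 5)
Your proposal is correct and follows essentially the same route as the paper: sum the formula of Theorem \ref{ladder graph: resistance distance matrix} over all ordered pairs, note that the $\varepsilon\varepsilon'$ term cancels, reduce the distance part to $\frac{n(n^2-1)}{3}$ (the paper writes this as $2Kf(P_n)$) and the $\alpha$ part to $\frac{n}{s_n}\sum_i a_ia_{n+1-i}$, then finish with identity \eqref{a_1a_n+...+a_na_1=(n+1)s_n+2na_n)/6}. The bookkeeping of the sign multiplicities and the final simplification $n(n^2-1)+n(n+1)=n^2(n+1)$ are all accurate.
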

\begin{proof}
We have 
\begin{align*}
Kf(L_n)&=\sum_{\{u,v\}\subseteq V(L_n)}r(u,v)\\
&=\frac{4n\sum_{i=1}^ns(L_n/f_i)}{4s_n}+2Kf(P_n)\\
&=\frac{n\sum_{i=1}^na_ia_{n+1-i}}{s_n}+\frac{n(n^2-1)}{3}
\end{align*}
from which the result follows by equation \eqref{a_1a_n+...+a_na_1=(n+1)s_n+2na_n)/6}.
\end{proof}
\section{Circular ladder graph}
In this section, we compute the Moore-Penrose inverse of incidence matrices of circular ladder graphs. Accordingly, we obtain the resistance distance matrix and Kirchhoff index of circular ladders. In what follows, $CL_n$ denotes the circular ladder graph of order $2n$ and $u_1^+,\ldots,u_n^+$ and $u_1^-,\ldots,u_n^-$ stand for the vertices of the internal and external cycles of $CL_n$, respectively, as in Fig. \ref{circular ladder}. Also, $u_{i+kn}^\varepsilon$ denotes the vertex $u_i^\varepsilon$ for all $1\leq i\leq n$, integer $k$, and $\varepsilon=\pm1$.

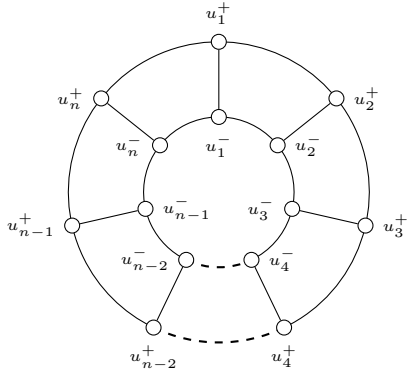
\begin{figure}[h]
\begin{tikzpicture}[>=stealth]
\draw ({cos(90-3*360/7)},{sin(90-3*360/7)}) arc (90-3*360/7:90+3*360/7:1cm);
\draw [dashed, thick] ({cos(90+3*360/7)},{sin(90+3*360/7)}) arc (90+3*360/7:90+4*360/7:1cm);
\draw ({2*cos(90-3*360/7)},{2*sin(90-3*360/7)}) arc (90-3*360/7:90+3*360/7:2cm);
\draw [dashed, thick] ({2*cos(90+3*360/7)},{2*sin(90+3*360/7)}) arc (90+3*360/7:90+4*360/7:2cm);
      
\node [draw,circle,fill=white,inner sep=2pt,label=above:\tiny{$u_1^+$}] (v1) at ({2*cos(90)}, {2*sin(90)}) {};
\node [draw,circle,fill=white,inner sep=2pt,label=right:\tiny{$u_2^+$}] (v2) at ({2*cos(90-360/7)}, {2*sin(90-360/7)}) {};
\node [draw,circle,fill=white,inner sep=2pt,label=right:\tiny{$u_3^+$}] (v3) at ({2*cos(90-2*360/7)}, {2*sin(90-2*360/7)}) {};
\node [draw,circle,fill=white,inner sep=2pt,label=below:\tiny{$u_4^+$}] (v4) at ({2*cos(90-3*360/7)}, {2*sin(90-3*360/7)}) {};
\node [draw,circle,fill=white,inner sep=2pt,label=below:\tiny{$u_{n-2}^+$}] (vn-2) at ({2*cos(90+3*360/7)}, {2*sin(90+3*360/7)}) {};
\node [draw,circle,fill=white,inner sep=2pt,label=left:\tiny{$u_{n-1}^+$}] (vn-1) at ({2*cos(90+2*360/7)}, {2*sin(90+2*360/7)}) {};
\node [draw,circle,fill=white,inner sep=2pt,label=left:\tiny{$u_n^+$}] (vn) at ({2*cos(90+360/7)}, {2*sin(90+360/7)}) {};

\node [draw,circle,fill=white,inner sep=2pt,label=below:\tiny{$u_1^-$}] (u1) at ({cos(90)}, {sin(90)}) {};
\node [draw,circle,fill=white,inner sep=2pt,label=right:\tiny{$u_2^-$}] (u2) at ({cos(90-360/7)}, {sin(90-360/7)}) {};
\node [draw,circle,fill=white,inner sep=2pt,label=left:\tiny{$u_3^-$}] (u3) at ({cos(90-2*360/7)}, {sin(90-2*360/7)}) {};
\node [draw,circle,fill=white,inner sep=2pt,label=right:\tiny{$u_4^-$}] (u4) at ({cos(90-3*360/7)}, {sin(90-3*360/7)}) {};
\node [draw,circle,fill=white,inner sep=2pt,label=left:\tiny{$u_{n-2}^-$}] (un-2) at ({cos(90+3*360/7)}, {sin(90+3*360/7)}) {};
\node [draw,circle,fill=white,inner sep=2pt,label=right:\tiny{$u_{n-1}^-$}] (un-1) at ({cos(90+2*360/7)}, {sin(90+2*360/7)}) {};
\node [draw,circle,fill=white,inner sep=2pt,label=left:\tiny{$u_n^-$}] (un) at ({cos(90+360/7)}, {sin(90+360/7)}) {};

\draw (u1)--(v1);
\draw (u2)--(v2);
\draw (u3)--(v3);
\draw (u4)--(v4);
\draw (un-2)--(vn-2);
\draw (un-1)--(vn-1);
\draw (un)--(vn);
\end{tikzpicture}
\caption{Circular ladder $CL_n$}
\label{circular ladder}
\end{figure}

Let $\G$ be a planar graph embedded on a shaded cylinder as in Fig. \ref{Jordan path}. A Jordan path is a smooth simple path starting from outer border and ending at the inner border such that it meets every face and edge at most once and never paths through vertices (see \cite{mr}). Removing edges crossed by a Jordan path $\J$ results in a graph whose spanning trees depend only on $\J$. On the other hand, every spanning tree of $\G$ determines a Jordan path. Therefore, the set of all spanning trees of $\G$ can be partitioned into sets $S_\J(\G)$ where each set $S_\J(\G)$ corresponds to spanning trees associated to a Jordan path $\J$. Let $s_\J(\G):=\#S_\J(\G)$.

\begin{figure}[h]
\begin{tikzpicture}
\draw [fill=lightgray] (0,0) circle (2.3cm);
\draw [fill=white] (0,0) circle (0.7cm);
\draw (0,0) circle (1cm);
\draw (0,0) circle (1.5cm);
\draw (0,0) circle (2cm);
      
\foreach \x in {0,...,7}
	{
		\draw ({cos(\x*45)}, {sin(\x*45)})--({2*cos(\x*45)}, {2*sin(\x*45)});
		\node [draw, circle, fill=white, inner sep=1pt] () at ({cos(\x*45)}, {sin(\x*45)}) {};
		\node [draw, circle, fill=white, inner sep=1pt] () at ({1.5*cos(\x*45)}, {1.5*sin(\x*45)}) {};
		\node [draw, circle, fill=white, inner sep=1pt] () at ({2*cos(\x*45)}, {2*sin(\x*45)}) {};
	}

\draw [dashed] ({2.5*cos(202.5)}, {2.5*sin(202.5)})--({1.75*cos(202.5)}, {1.75*sin(202.5)});
\draw [dashed] ({1.75*cos(202.5)}, {1.75*sin(202.5)}) arc (202.5:290.5:1.75);
\draw [dashed] ({1.75*cos(292.5)}, {1.75*sin(292.5)})--({1.25*cos(292.5)}, {1.25*sin(292.5)});
\draw [dashed] ({1.25*cos(292.5)}, {1.25*sin(292.5)}) arc (292.5:427.5:1.25);
\draw [dashed] ({1.25*cos(427.5)}, {1.25*sin(427.5)})--({0.5*cos(427.5)}, {0.5*sin(427.5)});
\end{tikzpicture}
\caption{Jordan path}
\label{Jordan path}
\end{figure}
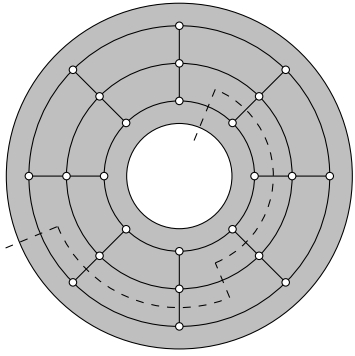
\begin{theorem}\label{circular ladder / f: spanning trees}
Let $CL_n$ be the circular ladder graph of order $2n$. If $f$ is a spoke, then
\[s(CL_n/f)=ns(L_n).\]
\end{theorem}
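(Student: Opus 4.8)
The plan is to use the dihedral symmetry of $CL_n$ to reduce to the single spoke $f=f_1$ at the ``seam'' (all spokes lie in one orbit), and then to count the spanning trees of $CL_n/f_1$ by passing to the planar dual. Since $CL_n/f_1$ is a connected plane graph, its spanning trees are in bijection with those of its planar dual, so it suffices to compute $s\bigl((CL_n/f_1)^\ast\bigr)$. Inspecting the embedding of $CL_n$ on the sphere, the dual of $CL_n$ is the cycle $C_n$ on the $n$ quadrilateral faces, together with two further vertices (the two discs bounded by the inner and outer $n$-cycles), each joined to every vertex of that $C_n$ and not to each other; and contracting the spoke $f_1$ amounts, in the dual, to \emph{deleting} the one edge of that $C_n$ joining the two quadrilateral faces incident with $f_1$. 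Hence $(CL_n/f_1)^\ast$ is the graph $G_n$ obtained from the path $P_n$ by adding two non-adjacent apex vertices, each adjacent to every vertex of $P_n$.

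Next I would compute the Laplacian spectrum of $G_n$. The vector $\mathbf 1$ gives eigenvalue $0$; the vector supported on the two apices with values $+1,-1$ gives eigenvalue $n$; and on the complementary subspace the Laplacian is, after a change of basis, $L(P_n)+2I$ together with a rank-one coupling to the apices, whose spectrum consists of $n+2$ (on the all-ones direction of the path) and $4-2\cos(k\pi/n)$ for $k=1,\dots,n-1$ (on the eigenvectors of $L(P_n)$ orthogonal to $\mathbf 1$). By the Matrix--Tree theorem,
\[
s(CL_n/f_1)=s(G_n)=\frac{1}{n+2}\cdot n\cdot(n+2)\cdot\prod_{k=1}^{n-1}\bigl(4-2\cos\tfrac{k\pi}{n}\bigr)=n\prod_{k=1}^{n-1}\bigl(4-2\cos\tfrac{k\pi}{n}\bigr).
\]

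Finally I would identify the product with $s(L_n)$. Since $L_n=P_n\,\square\,K_2$, the nonzero Laplacian eigenvalues of $L_n$ are the numbers $4-2\cos(k\pi/n)$, $k=0,\dots,n-1$, together with $2-2\cos(k\pi/n)$, $k=1,\dots,n-1$; using $\prod_{k=1}^{n-1}\bigl(2-2\cos(k\pi/n)\bigr)=n$ in the Matrix--Tree theorem gives $s(L_n)=\prod_{k=1}^{n-1}\bigl(4-2\cos(k\pi/n)\bigr)$. Equivalently, this product satisfies the $(4,1)$-recursion with values $1$ and $4$ at $n=1,2$, so it equals $s_n$, which we already know equals $s(L_n)$. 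Either way $s(CL_n/f_1)=n\,s(L_n)$.

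I expect the two bookkeeping steps to be the main obstacle: pinning down $(CL_n/f_1)^\ast$ precisely — the contraction must be tracked carefully through the duality, since it becomes an edge deletion between the two quadrilateral-face neighbours of $f_1$ — and carrying out the eigenvalue computation for $G_n$ cleanly. A fully combinatorial alternative avoiding duality is to run deletion--contraction on a spoke adjacent to $f_1$: this expresses $s(CL_n/f_1)$ through spanning-tree counts of ``partially contracted'' circular ladders, which after unwinding reduce to the quantities $s(L_m)=s_m$ and $s(L_m/f_i)=a_ia_{m-i+1}$ from the previous section; the resulting linear recursion has characteristic polynomial $(x^2-4x+1)^2$, and comparing it with the recursion satisfied by $n s_n$ (which follows from $\{s_n\}$ being a $(4,1)$-sequence), together with a check of small cases, again yields the claim. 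The delicate point on that route is handling the multiple edges that appear under contraction.
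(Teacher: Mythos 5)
Your argument is correct, but it is a genuinely different route from the paper's. The paper embeds $CL_n$ on a cylinder and partitions the spanning trees of $CL_n/f$ according to the Jordan path they determine; each class is counted by a contracted ladder, $s(L_m/f_i)=a_ia_{m-i+1}$, and the resulting double sum is collapsed using the identities of Lemma \ref{sequences} (in particular \eqref{s_n=a_is_(n-i)+a_(n-i+1)s_i}). You instead pass to the planar dual: the dual of $CL_n$ is the $n$-gonal bipyramid, contraction of the spoke dualizes to deletion of the corresponding $C_n$-edge, and the resulting double-apex path graph is handled by the Matrix--Tree theorem. I checked your spectral computation: the eigenvalues $0$, $n$, $n+2$, and $4-2\cos(k\pi/n)$ for $k=1,\dots,n-1$ are right (the last family comes from eigenvectors of $L(P_n)$ orthogonal to $\1$, extended by zero on the apices), and the identification $s(L_n)=\prod_{k=1}^{n-1}\bigl(4-2\cos(k\pi/n)\bigr)$ via $L_n=P_n\,\square\,K_2$ is also correct, so the two sides match. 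What each approach buys: the paper's Jordan-path decomposition reuses the formula $s(L_n/f_i)=a_ia_{n-i+1}$ already established via the Moore--Penrose inverse, and the identical bookkeeping immediately yields $s(CL_n)$ in Theorem \ref{circular ladder: spanning trees}; your duality-plus-spectrum argument is self-contained, avoids the sequence identities entirely, and produces a clean trigonometric product, but it leans on planarity and so does not transfer to the M\"obius ladder, and it would need the small-case conventions ($n\geq 3$, so that the dual has no multiple edges between the quadrilateral faces) stated explicitly. Your sketched deletion--contraction alternative is plausible but, as you note, the multi-edge bookkeeping there is exactly where it would need to be written out in full.
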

\begin{proof}
Consider a natural embedding of circular ladder $CL_n$ on a cylinder as in Fig. \ref{Jordan path} and assume without loss of generality that $f=u_1^+u_1^-$. We have two (equivalent up to reflection) classes of Jordan paths:
\begin{itemize}
\item[(1)]Jordan paths $\J_{i,j}^1$ through edges $u_i^+u_{i+1}^+,u_{i+1}^+u_{i+1}^-,\ldots,u_{j-1}^+u_{j-1}^-,u_{j-1}^-u_j^-$ for $1\leq i<j\leq n+1$;
\item[(2)]Jordan paths $\J_{i,j}^2$ through edges $u_j^+u_{j-1}^+,u_{j-1}^+u_{j-1}^-,\ldots,u_{i+1}^+u_{i+1}^-,u_i^-u_{i+1}^-$ for $1\leq i<j\leq n+1$.
\end{itemize}
Note that the Jordan paths in parts (1) and (2) coincide when $j=i+1$. Also $s_{\J_{i,j}^1}(CL_n/f)=s_{\J_{i,j}^2}(CL_n/f)$. Utilizing the above Jordan paths in conjunction with equation \eqref{s(L_n/f_i)=a_ia_(n-i+1)}, yields
\begin{align*}
s(CL_n/f)=&2\sum_{1\leq i<j\leq n+1}s_{\J_{i,j}^1}(CL_n/f)-\sum_{i=1}^ns_{\J_{i,i+1}^1}(CL_n/f)\\
=&2\sum_{1\leq i<j\leq n+1}s(L_{n+1+i-j}/f_i)-\sum_{i=1}^ns(L_n/f_i)\\
=&2\sum_{1\leq i<j\leq n+1}a_ia_{n+2-j}-\sum_{i=1}^na_ia_{n+1-i}\\
=&2\sum_{2\leq j\leq n+1}s_{j-1}a_{n+2-j}-\sum_{i=1}^na_ia_{n+1-i}\\
=&2\sum_{1\leq i\leq n}a_is_{n+1-i}-\sum_{i=1}^na_ia_{n+1-i},
\end{align*}
where $L_s/f_t$ denotes the contraction of the $t$'s spoke of $L_s$ from right or left. Let $x_n:=\sum_{i=1}^na_ia_{n+1-i}$ and $y_n:=\sum_{i=1}^na_is_{n+1-i}$ for all $n\geq1$. Clearly, $y_n-y_{n-1}=x_n$. Also, from equation \eqref{s_n=a_is_(n-i)+a_(n-i+1)s_i}, it follows that $y_n+y_{n-1}=ns_n$. Thus $2y_n=ns_n+x_n$, which implies that 
\[s(CL_n/f)=2y_n-x_n=ns_n,\]
as required.
\end{proof}
\begin{theorem}\label{circular ladder: spanning trees}
Let $CL_n$ be the circular ladder graph of order $2n$. Then
\[s(CL_n)=\frac{1}{2}n(a_{n+1}+a_n-2).\]
\end{theorem}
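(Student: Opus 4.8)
The plan is to mimic the Jordan path argument of Theorem~\ref{circular ladder / f: spanning trees}, but applied to $CL_n$ itself instead of to $CL_n/f$. (Alternatively one may start from the deletion--contraction identity $s(CL_n)=s(CL_n/f)+s(CL_n-f)$, plug in $s(CL_n/f)=ns(L_n)=ns_n$ from Theorem~\ref{circular ladder / f: spanning trees}, and handle $s(CL_n-f)$ by the same Jordan path count; the two approaches produce the same sum, so I describe the direct one.)

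First I would classify the Jordan paths of the natural cylinder embedding of $CL_n$. Exactly as in the proof of Theorem~\ref{circular ladder / f: spanning trees}, each such path crosses one outer edge $u_i^+u_{i+1}^+$, then a block of $d-1$ consecutive spokes $f_{i+1},\dots,f_{i+d-1}$, then one inner edge, and for each base index $i\in\{1,\dots,n\}$ and each width $d\in\{1,\dots,n\}$ there are two reflected paths $\J^1_{i,d}$ and $\J^2_{i,d}$. These two coincide precisely when $d=1$, and one must check that there are no other coincidences: two blocks of consecutive spokes of different lengths cannot agree, and a block and the complementary block cannot agree either. This gives the bookkeeping $s(CL_n)=2\sum_{i=1}^n\sum_{d=1}^n s_{\J^1_{i,d}}(CL_n)-\sum_{i=1}^n s_{\J^1_{i,1}}(CL_n)$, in parallel with the displayed identity in Theorem~\ref{circular ladder / f: spanning trees}.

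Next I would evaluate $s_{\J}(CL_n)$ for $\J=\J^1_{i,d}$. Deleting the edges crossed by $\J$ opens the outer cycle into a path on $n$ vertices, opens the inner cycle into a path on $n$ vertices, and removes the $d-1$ spokes $f_{i+1},\dots,f_{i+d-1}$, leaving the $n-d+1$ spokes $f_{i+d},\dots,f_i$. Aligning the surviving spokes as rungs identifies the resulting graph as the ladder $L_{n-d+1}$ with a pendant path of $d-1$ edges attached at one corner and another pendant path of $d-1$ edges attached at the diagonally opposite corner. Since pendant edges lie in every spanning tree, $s_{\J^1_{i,d}}(CL_n)=s_{\J^2_{i,d}}(CL_n)=s(L_{n-d+1})=s_{n-d+1}$, using $s(L_m)=s_m$.

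Finally I would assemble and simplify. Summing,
\[
s(CL_n)=2\sum_{i=1}^{n}\sum_{d=1}^{n}s_{n-d+1}-\sum_{i=1}^{n}s_n=2n\sum_{m=1}^{n}s_m-ns_n,
\]
and by \eqref{2s_n=a_(n+1)-a_n} the sum telescopes: $2\sum_{m=1}^{n}s_m=\sum_{m=1}^{n}(a_{m+1}-a_m)=a_{n+1}-a_1=a_{n+1}-1$. Hence $s(CL_n)=n(a_{n+1}-1)-ns_n$, and one application of \eqref{2s_n=a_(n+1)-a_n} in the form $2s_n=a_{n+1}-a_n$ gives $s(CL_n)=na_{n+1}-n-\tfrac{n}{2}(a_{n+1}-a_n)=\tfrac12 n(a_{n+1}+a_n-2)$, as claimed. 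The main obstacle is the combinatorial bookkeeping in the second paragraph: pinning down that the only coincidence among the Jordan paths $\J^1_{i,d},\J^2_{i,d}$ occurs at $d=1$, and verifying that after deleting the crossed edges the surviving spokes genuinely line up into a ladder $L_{n-d+1}$ (plus two pendant paths), so that $s_\J(CL_n)=s_{n-d+1}$. Once that is in place, the rest is the short telescoping computation above.
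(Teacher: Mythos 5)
Your proof is correct and follows essentially the same route as the paper: the same two reflected families of Jordan paths, the same intermediate identity $s(CL_n)=2n\sum_{i=1}^n s(L_i)-ns(L_n)$, and the same telescoping via \eqref{2s_n=a_(n+1)-a_n}. You merely supply the details (the identification $s_{\J^1_{i,d}}(CL_n)=s(L_{n-d+1})$ via the pendant-path observation, and the check that the only coincidence between the two families occurs at $d=1$) that the paper compresses into ``analogous to the proof of Theorem \ref{circular ladder / f: spanning trees}''; note also that your final expression $\tfrac12 n(a_{n+1}+a_n-2)$ is the correct one, the minus sign in the paper's displayed intermediate formula being a typo.
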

\begin{proof}
Analogous to the proof of Theorem \ref{circular ladder / f: spanning trees}, we consider two families of (equivalent up to reflection) Jordan paths given by $\J_{i,j}^1$ and $\J_{i,j}^2$ as in Theorem \ref{circular ladder / f: spanning trees} where $i$ is behind $j$ regarding clockwise rotations. From the rotational symmetry of $CL_n$ and the equality of $\J_{i,j}^1$ and $\J_{i,j}^2$ for consecutive values of $i$ and $j$, it follows that
\begin{align*}
s(CL_n)&=2n\sum_{i=1}^ns(L_i)-ns(L_n)=\frac{1}{2}n(a_{n+1}-a_n-2)
\end{align*}
by equation \eqref{2s_n=a_(n+1)-a_n}.
\end{proof}

In what follows, $e_i^+$, $e_i^-$, and $f_i$ stand for directed edges $(u_i^+,u_{i+1}^+)$, $(u_i^-,u_{i+1}^-)$, and $(u_i^+,u_i^-)$ for $i=1,\ldots,n$. Accordingly, we consider the orientation of $CL_n$ whose directed edges are $e_i^+$, $e_i^-$, and $f_i$ for $i=1,\ldots,n$.
\begin{theorem}\label{circular ladder: mpinv}
Let $CL_n$ be the oriented circular ladder graph of order $2n$ with incidence matrix $Q$. The Moore-Penrose inverse $Q^+=[q_{e,u}^+]$ of $Q$ is given by
\begin{align*}
q^+_{e_{i+t}^{\varepsilon'},u_i^\varepsilon}&=-\varepsilon\varepsilon'\frac{2a_{t+1}-a_t}{4}\cdot\frac{a_{n+1}-a_n}{a_{n+1}+a_n-2}+\varepsilon\varepsilon'\frac{a_{t+1}+\varepsilon\varepsilon'1}{4}-\frac{2t+1}{4n},\\
\varepsilon q^+_{f_{i+t},u_i^\varepsilon}&=\frac{a_{t+1}+a_t}{4}\cdot\frac{a_{n+1}-a_n}{a_{n+1}+a_n-2}-\frac{a_{t+1}-a_t}{4}
\end{align*}
for all $1\leq i\leq n$, $t=0,\ldots,n-1$, and $\varepsilon,\varepsilon'=\pm1$.
\end{theorem}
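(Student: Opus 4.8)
The plan is to follow the template of Theorem~\ref{ladder graph: mpinv}. Write $H=[q^+_{e,u}]$ for the $3n\times 2n$ matrix whose entries are the two expressions displayed in the statement, with all vertex and edge indices read modulo $n$, and abbreviate the ``twist factor'' by $\theta:=\frac{a_{n+1}-a_n}{a_{n+1}+a_n-2}$ (which is well defined since $a_{n+1}+a_n-2>0$). Because every column of $Q$ carries exactly one $+1$ and one $-1$ we have $\1'Q=0$, hence $JQ=0$; dually, once $H\1=0$ is known we get $HJ=0$. Thus it suffices to establish the three identities
\[
H\1=0,\qquad QH=I-\tfrac1{2n}J,\qquad HQ=(HQ)',
\]
for then $QHQ=Q-\tfrac1{2n}JQ=Q$, $HQH=H-\tfrac1{2n}HJ=H$, and $QH$ is symmetric by inspection, so $H=Q^+$ exactly as before. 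The first identity is immediate: by the cyclic labelling the sum of the row of $H$ indexed by $e_a^{\varepsilon'}$ over all $2n$ columns equals $\sum_{t=0}^{n-1}\sum_{\varepsilon=\pm1}q^+_{e_{i+t}^{\varepsilon'},u_i^{\varepsilon}}$ for any fixed $i$; the terms bearing a factor $\varepsilon\varepsilon'$ cancel upon summing over $\varepsilon$, and the rest is $2\sum_{t=0}^{n-1}\bigl(\tfrac14-\tfrac{2t+1}{4n}\bigr)=\tfrac n2-\tfrac{1}{2n}\sum_{t=0}^{n-1}(2t+1)=0$ since $\sum_{t=0}^{n-1}(2t+1)=n^2$, while for a row indexed by $f_a$ the entry $q^+_{f_{i+t},u_i^{\varepsilon}}$ is odd in $\varepsilon$, so summing over $\varepsilon=\pm1$ already gives $0$.

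For $QH=I-\tfrac1{2n}J$, note that the row of $Q$ indexed by $u_i^+$ has entries $+1,-1,+1$ in columns $e_i^+,e_{i-1}^+,f_i$, and the row indexed by $u_i^-$ has entries $+1,-1,-1$ in $e_i^-,e_{i-1}^-,f_i$. Putting $t=i-j\bmod n$ and substituting, the coefficient of $\theta$ in $(QH)_{u_i^{\varepsilon},u_j^{\varepsilon'}}$ is $\pm\varepsilon'$ times $(2a_{t+1}-a_t)-(2a_t-a_{t-1})-(a_{t+1}+a_t)=a_{t+1}-4a_t+a_{t-1}$, which vanishes by the $(4,1)$-recursion whenever $1\le t\le n-1$; the remaining terms telescope through $a_{t+1}-a_t=2s_t$ to $-\tfrac1{2n}$, the required off-diagonal value. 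The only genuinely different case is $t=0$ (that is, $j=i$), where $e_{i-1}^{\varepsilon'}$ carries offset $n-1$ rather than $-1$, so $\theta$ survives; collecting terms, both $(QH)_{u_i^{\varepsilon},u_i^{\varepsilon}}=1-\tfrac1{2n}$ and $(QH)_{u_i^{\varepsilon},u_i^{-\varepsilon}}=-\tfrac1{2n}$ reduce to the single identity $(1+a_{n+1}-2a_n)(a_{n+1}-a_n)=(a_n+1)(a_{n+1}+a_n-2)$, which upon expansion is precisely equation~\eqref{(a_(n+1)+a_n)^2=3(a_(n+1)-a_n)^2+4}.

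For the symmetry of $HQ$, the column of $Q$ indexed by $e_i^{\varepsilon'}$ contributes $h_{\cdot,u_i^{\varepsilon'}}-h_{\cdot,u_{i+1}^{\varepsilon'}}$ and that indexed by $f_i$ contributes $h_{\cdot,u_i^+}-h_{\cdot,u_i^-}$, so $HQ$ splits into the blocks $e^{\varepsilon}$--$e^{\varepsilon'}$, $e^{\varepsilon}$--$f$, $f$--$e^{\varepsilon}$ and $f$--$f$, and in each the claim $(HQ)_{e,e'}=(HQ)_{e',e}$ becomes the assertion that a certain function of the offset $t$ is invariant under the reflection $t\mapsto -t$ or $t\mapsto -t-1$ (reduced mod $n$). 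Proceeding exactly as in Lemma~\ref{ladder graph: HQ symmetric}, each of these reduces, after repeated use of \eqref{2s_n=a_(n+1)-a_n}, \eqref{a_(n+1)-a_n=a_(i+1)a_(n-i+1)-a_ia_(n-i)}, \eqref{s_n=a_is_(n-i)+a_(n-i+1)s_i} and the identity controlling $\theta$ coming from \eqref{(a_(n+1)+a_n)^2=3(a_(n+1)-a_n)^2+4}, to a manifestly symmetric expression; for instance the $f$--$f$ block amounts to the invariance of $(a_{t+1}+a_t)\theta-(a_{t+1}-a_t)$ under $t\mapsto n-t$.

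The part I expect to be the main obstacle is not any single identity but the organization of the computation: keeping the cyclic indices straight so that the wrap-around offsets are never confused with the naive shifts $t\pm1$, and arranging the case distinctions in the last two steps so that the twist factor $\theta$ is invariably eliminated by equation~\eqref{(a_(n+1)+a_n)^2=3(a_(n+1)-a_n)^2+4}. Once this is handled, the verification uses nothing beyond Lemma~\ref{sequences}, and $H=Q^+$ follows from the four Moore--Penrose equations as recorded in the first paragraph.
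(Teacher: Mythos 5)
Your proof is correct in outline but takes a genuinely different route from the paper. You \emph{verify} the stated matrix $H$ directly against the four Moore--Penrose equations, exactly as the paper does for the ladder graph in Lemmas \ref{ladder graph: H1=0}--\ref{ladder graph: HQ symmetric} and Theorem \ref{ladder graph: mpinv}; the paper instead \emph{derives} the formulas, using the resistance identity $2q^+_{f_i,u_i^+}=s(CL_n/f_i)/s(CL_n)$ (hence Theorems \ref{circular ladder / f: spanning trees} and \ref{circular ladder: spanning trees} on spanning-tree counts) to pin down one initial value, and then propagating it around the cylinder via the cut-set identity of \cite[Lemma 2.1]{aa-rbb-ee} and the cycle-space relation, which produces the affine recurrence $X_{j+1}=CX_j+D$ solved by computing $C^t$ and $I+C+\cdots+C^{t-1}$ explicitly. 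Your route is more self-contained --- it needs none of the Jordan-path spanning-tree machinery, only Lemma \ref{sequences} --- while the paper's route explains where the formulas come from and yields the spanning-tree corollaries along the way. Your key computations check out: the row sums of $H$ vanish as you say; for $QH$ the coefficient of $\theta$ collapses to $a_{t+1}-4a_t+a_{t-1}=0$ off the diagonal, and on the diagonal the surviving identity $(1+a_{n+1}-2a_n)(a_{n+1}-a_n)=(a_n+1)(a_{n+1}+a_n-2)$ is indeed equivalent to \eqref{(a_(n+1)+a_n)^2=3(a_(n+1)-a_n)^2+4} (both reduce to $2a_na_{n+1}=(a_{n+1}-a_n)^2+2$). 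The one place you are thinner than you should be is the symmetry of $HQ$: you fully reduce only the $f$--$f$ block and assert the $e$--$e$ and $e$--$f$ blocks; each of the remaining blocks requires an identity of the form $(a_{n-t+1}-a_{n-t-1})\theta-(a_{n-t+1}-2a_{n-t}+a_{n-t-1})=-(2a_{t+1}-a_t-a_{t+2}+\cdots)$ relating offsets $t$ and $n-t$ (or $n-t-1$), and these should be written out and checked against Lemma \ref{sequences}; they do hold, but they are not instances of a single reflection principle and deserve the same case-by-case treatment the paper gives in Lemma \ref{ladder graph: HQ symmetric}.
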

\begin{proof}
Let $d=d(f_i,u_j)$. From the symmetry of graph, it follows that $q^+_{f_i,u_i^-}=-q^+_{f_i,u_i^+}$. Hence
\[2q^+_{f_i,u_i^+}=r(u_i^+,u_i^-)=\frac{s(CL_n/f_i)}{s(CL_n)}=\frac{s_n}{a_{n+1}+a_n-2}\]
by Lemma \ref{resistance} and Theorems \ref{circular ladder / f: spanning trees} and \ref{circular ladder: spanning trees}. 

Consider the edge cut-set $\{e_j^+,f_{j+1},e_{j+1}^+\}$, where $j=i+t$ with $0\leq t<n$. From \cite[Lemma 2.1]{aa-rbb-ee}, it follows that
\begin{equation}\label{CL_n: q^+_(e_j^+,u_i^+)}
q^+_{e_{j+1}^+,u_i^+}=\delta_{i+1,j}-\frac{1}{2n}-q^+_{f_{j+1},u_i^+}+q^+_{e_j^+,u_i^+}.
\end{equation}
Analogously, by considering the edge cut-set $\{e_j^-,f_{j+1},e_{j+1}^-\}$, we obtain
\begin{equation}\label{CL_n: q^+_(e_j^-,u_i^+)}
q^+_{e_{j+1}^-,u_i^+}=-\frac{1}{2n}+q^+_{f_{j+1},u_i^+}+q^+_{e_j^-,u_i^+}.
\end{equation}
for all $j\geq i$. If we replace $j$ by $i-1$, and use the equalities 
\[q^+_{e_{i-1}^+,u_i^+}=-q^+_{e_i^+,u_i^+}\quad\text{and}\quad q^+_{e_{i-1}^-,u_i^+}=-q^+_{e_i^-,u_i^+}\]
obtained from the symmetry of the graph, then we get the initial values
\[q^+_{e_i^+,u_i^+}=\frac{1}{2}\parn{1-\frac{1}{2n}-q^+_{f_i,u_i^+}}\quad\text{and}\quad q^+_{e_i^-,u_i^+}=\frac{1}{2}\parn{\frac{-1}{2n}+q^+_{f_i,u_i^+}}.\]
On the other hand, the incidence vector of the cycle induced by $\{u_j^+,u_{j+1}^+,u_j^-,u_{j+1}^-\}$ lies in the left null space of $Q^+$, from which we obtain
\[q^+_{f_{j+1},u_i^+}-q^+_{e_j^-,u_i^+}-q^+_{f_j,u_i^+}+q^+_{e_j^+,u_i^+}=0\]
so that
\begin{equation}\label{CL_n: q^+_(f_(j+1),u_i^+)}
q^+_{f_{j+1},u_i^+}=q^+_{e_j^-,u_i^+}+q^+_{f_j,u_i^+}-q^+_{e_j^+,u_i^+}
\end{equation}
for all $j\geq i$. Let 
\[C:=\begin{pmatrix}
2&-1&-1\\
-1&2&1\\
-1&1&1
\end{pmatrix}
\quad\text{and}\quad
D:=-\frac{1}{2n}\begin{pmatrix}
1\\1\\0
\end{pmatrix}.\]
If $X_j$ denotes the column matrix $(q^+_{e_j^+,u_i^+}\ q^+_{e_j^-,u_i^+}\ q^+_{f_j,u_i^+})^T$ for $i\leq j<i+n$, then from the equalities \eqref{CL_n: q^+_(e_j^-,u_i^+)} and \eqref{CL_n: q^+_(f_(j+1),u_i^+)}, it follows that
\[X_{j+1}=CX_j+D\]
for all $i\leq j<i+n$. Using induction on $t\geq0$, one can prove that
\[C^t=\begin{pmatrix}
\frac{a_{t+1}+1}{2}&-\frac{a_{t+1}-1}{2}&-s_t\\
-\frac{a_{t+1}-1}{2}&\frac{a_{t+1}+1}{2}&s_t\\
-s_t&s_t&a_t
\end{pmatrix}\]
and
\[I+C+\cdots+C^{t-1}=\begin{pmatrix}
\frac{s_t+t}{2}&-\frac{s_t-t}{2}&-\frac{a_t-1}{2}\\
-\frac{s_t-t}{2}&\frac{s_t+t}{2}&\frac{a_t-1}{2}\\
-\frac{a_t-1}{2}&\frac{a_t-1}{2}&s_{t-1}+1
\end{pmatrix}.\]
Now, from the equality
\[X_{i+t}=C^tX_i+(I+C+\cdots+C^{t-1})D,\]
it follows that
\begin{align*}
q^+_{e_{i+t}^+,u_i^+}&=-\frac{2a_{t+1}-a_t}{4}\cdot\frac{a_{n+1}-a_n}{a_{n+1}+a_n-2}+\frac{a_{t+1}+1}{4}-\frac{2t+1}{4n},\\
q^+_{e_{i+t}^-,u_i^+}&=\frac{2a_{t+1}-a_t}{4}\cdot\frac{a_{n+1}-a_n}{a_{n+1}+a_n-2}-\frac{a_{t+1}-1}{4}-\frac{2t+1}{4n},\\
q^+_{f_{i+t},u_i^+}&=\frac{a_{t+1}+a_t}{4}\cdot\frac{a_{n+1}-a_n}{a_{n+1}+a_n-2}-\frac{a_{t+1}-a_t}{4}
\end{align*}
for all $t=0,\ldots,n-1$. Finally, the result follows by applying the same argument or using the symmetry of graph for vertices $u_1^-,\ldots,u_n^-$.
\end{proof}
\begin{corollary}
If $n=2k+1$, then
\[s(CL_n)=n s(L_n)\parn{\frac{s(L_{k+1})+s(L_{k})}{s(L_{k+1})-s(L_{k})}}\]
and if $n=2k$, then
\[s(CL_n)=ns(L_n)\parn{\frac{s(L_{k+1})+2s(L_k)+s(L_{k-1})}{s(L_{k+1})-s(L_{k-1})}}.\]
\end{corollary}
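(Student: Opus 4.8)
The plan is to turn both formulas into identities purely among $\{a_m\}$ and $\{s_m\}$ and verify them with Lemma \ref{sequences}. Using $s(L_m)=s_m$ and $s(CL_n)=\frac12 n(a_{n+1}+a_n-2)$ (Theorem \ref{circular ladder: spanning trees}), the two claims amount to
\[
\frac{a_{2k+2}+a_{2k+1}-2}{2s_{2k+1}}=\frac{s_{k+1}+s_k}{s_{k+1}-s_k}
\qquad\text{and}\qquad
\frac{a_{2k+1}+a_{2k}-2}{2s_{2k}}=\frac{s_{k+1}+2s_k+s_{k-1}}{s_{k+1}-s_{k-1}};
\]
here $s_{k+1}-s_k=a_{k+1}\ge 1$ and $s_{k+1}-s_{k-1}=a_k+a_{k+1}\ge 2$ by \eqref{s_n=a_1+...+a_n}, which also yields the relation $a_{m+1}=s_{m+1}-s_m$ that I will use throughout.

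First I would record a few consequences of Lemma \ref{sequences}. Specializing \eqref{s_n=a_is_(n-i)+a_(n-i+1)s_i} at indices $k$ and $k+1$ gives $s_{2k}=(a_k+a_{k+1})s_k$ and $s_{2k+1}=a_{k+1}(s_k+s_{k+1})=(s_{k+1}-s_k)(s_{k+1}+s_k)=s_{k+1}^2-s_k^2$. Combining \eqref{(a_(n+1)+a_n)^2=3(a_(n+1)-a_n)^2+4} with \eqref{2s_n=a_(n+1)-a_n} gives $(a_{m+1}+a_m)^2=12s_m^2+4$ for every $m$, so in particular $(a_{2k+1}+a_{2k})^2=12s_{2k}^2+4$ and $(a_{2k+2}+a_{2k+1})^2=12s_{2k+1}^2+4$. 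I will also use the defining recurrence in the form $s_{k+1}+s_{k-1}=4s_k$ and the Cassini-type identity $s_k^2-s_{k-1}s_{k+1}=1$, equivalently $s_k^2+s_{k+1}^2=4s_ks_{k+1}+1$, which follows by a one-line induction since $s_{k+1}^2-s_ks_{k+2}=s_{k+1}^2-4s_ks_{k+1}+s_k^2=s_k^2-s_{k-1}s_{k+1}$ and $s_1^2-s_0s_2=1$.

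For the even case, $(a_k+a_{k+1})^2=12s_k^2+4$ together with $s_{2k}=(a_k+a_{k+1})s_k$ gives $s_{2k}^2=(12s_k^2+4)s_k^2$, hence $(a_{2k+1}+a_{2k})^2=12s_{2k}^2+4=(12s_k^2+2)^2$ and, by positivity of the $a_m$, $a_{2k+1}+a_{2k}-2=12s_k^2$. Since $s_{2k}=(a_k+a_{k+1})s_k=(s_{k+1}-s_{k-1})s_k$ and $s_{k+1}+2s_k+s_{k-1}=(s_{k+1}+s_{k-1})+2s_k=6s_k$, this gives $s(CL_{2k})=k(a_{2k+1}+a_{2k}-2)=12ks_k^2=2k\,s_{2k}\cdot\frac{6s_k}{\,s_{k+1}-s_{k-1}\,}$, which is the asserted formula.

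For the odd case, $s_{2k+1}=s_{k+1}^2-s_k^2$ gives $(a_{2k+2}+a_{2k+1})^2=12(s_{k+1}^2-s_k^2)^2+4$; writing $(s_{k+1}^2-s_k^2)^2=(s_{k+1}^2+s_k^2)^2-4s_k^2s_{k+1}^2$ and inserting $s_{k+1}^2+s_k^2=4s_ks_{k+1}+1$ turns this into $(12s_ks_{k+1}+4)^2$, so $a_{2k+2}+a_{2k+1}-2=12s_ks_{k+1}+2$. Meanwhile $s_{2k+1}=(s_{k+1}-s_k)(s_{k+1}+s_k)$ yields $s_{2k+1}\cdot\frac{s_{k+1}+s_k}{s_{k+1}-s_k}=(s_{k+1}+s_k)^2=(s_k^2+s_{k+1}^2)+2s_ks_{k+1}=6s_ks_{k+1}+1$, again by the Cassini identity, and therefore $s(CL_{2k+1})=\frac{2k+1}{2}(a_{2k+2}+a_{2k+1}-2)=(2k+1)(6s_ks_{k+1}+1)=(2k+1)s_{2k+1}\cdot\frac{s_{k+1}+s_k}{s_{k+1}-s_k}$, as claimed. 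The only points that need a little care are keeping the index bookkeeping straight in the two specializations of \eqref{s_n=a_is_(n-i)+a_(n-i+1)s_i} and choosing the positive square roots; the single ingredient not already displayed in Lemma \ref{sequences}, the Cassini-type identity, has a routine inductive proof, so I do not expect a genuine obstacle.
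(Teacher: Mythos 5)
Your argument is correct, and every identity you use checks out numerically (e.g.\ for $n=4$ both sides give $384$, for $n=3$ both give $75$). The reduction you perform is the same one the paper makes: both proofs boil the corollary down to the half-index identity
\[\frac{a_{n+1}+a_n-2}{a_{n+1}-a_n}=\frac{s_{k+1}+s_k}{s_{k+1}-s_k}\quad\text{or}\quad\frac{s_{k+1}+2s_k+s_{k-1}}{s_{k+1}-s_{k-1}}\]
according to the parity of $n$. Where you differ is in both ends of the argument. At the front end, the paper re-derives $s(CL_n)=ns(L_n)\cdot\frac{a_{n+1}+a_n-2}{a_{n+1}-a_n}$ from the electrical identity $s(CL_n/f_i)/s(CL_n)=r(u_i^+,u_i^-)=2q^+_{f_i,u_i^+}$ together with Theorems \ref{circular ladder / f: spanning trees} and \ref{circular ladder: mpinv}, whereas you simply quote the closed form of Theorem \ref{circular ladder: spanning trees} and use $2s_n=a_{n+1}-a_n$; both are legitimate since all the cited results precede the corollary. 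At the back end, the paper disposes of the half-index identity with one sentence invoking Binet's formulas, while you give a fully elementary verification via $s_{2k}=(a_k+a_{k+1})s_k$, $s_{2k+1}=s_{k+1}^2-s_k^2$ (both special cases of \eqref{s_n=a_is_(n-i)+a_(n-i+1)s_i}), the identity $(a_{m+1}+a_m)^2=12s_m^2+4$, and a Cassini-type identity $s_k^2-s_{k-1}s_{k+1}=1$ that you correctly note is not in Lemma \ref{sequences} and prove by induction. Your route costs one auxiliary lemma but buys a self-contained integer-arithmetic proof that never leaves $\mathbb{Z}$; the paper's route is shorter on the page but leaves the key computation to the reader. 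Incidentally, your version also makes explicit the denominators $s_{k+1}-s_k=a_{k+1}$ and $s_{k+1}-s_{k-1}=a_k+a_{k+1}$, which exposes a typo in the paper's own proof (it writes $s_{k+1}-s_k$ where the even case requires $s_{k+1}-s_{k-1}$).
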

\begin{proof}
We know that $s(CL_n/f_i)/s(CL_n)=r(u_i^+,u_i^-)=2q^+_{f_i,u_i^+}$. Thus
\[s(CL_n)=ns(L_n)\cdot\frac{a_{n+1}+a_n-2}{a_{n+1}-a_n}.\]
Using Binet's formulas, one can easily show that 
 \[\frac{a_{n+1}+a_n-2}{a_{n+1}-a_n}=\frac{s_{k+1}+s_k}{s_{k+1}-s_k}\quad\text{or}\quad\frac{s_{k+1}+2s_k+s_{k-1}}{s_{k+1}-s_k}\]
according to $n=2k+1$ or $n=2k$, respectively. The result follows.
\end{proof}
\begin{corollary}\label{circular ladder: resistance}
Let $1\leq i\leq n$, $0\leq t<n$, and $\varepsilon,\varepsilon'=\pm1$. Then
\[r(u_i^\varepsilon,u_{i+t}^{\varepsilon'})=-\varepsilon\varepsilon'\frac{a_{t+1}+a_t-\varepsilon\varepsilon'2}{4}\cdot\frac{a_{n+1}-a_n}{a_{n+1}+a_n-2}+\varepsilon\varepsilon'\frac{a_{t+1}-a_t}{4}+\frac{t}{2}-\frac{t^2}{2n}.\]
\end{corollary}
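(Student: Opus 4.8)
The plan is to compute the resistance distance $r(u_i^\varepsilon,u_{i+t}^{\varepsilon'})$ directly from the Moore-Penrose inverse $Q^+$ obtained in Theorem \ref{circular ladder: mpinv}, using Lemma \ref{resistance}. Since in $CL_n$ the vertices $u_i^\varepsilon$ and $u_{i+t}^{\varepsilon'}$ are joined by a directed path (when $\varepsilon=\varepsilon'$, the arc $e_i^\varepsilon,e_{i+1}^\varepsilon,\ldots,e_{i+t-1}^\varepsilon$ along one cycle; when $\varepsilon\neq\varepsilon'$, the same arc followed by or preceded by the spoke $f$), Lemma \ref{resistance} reduces the problem to summing entries of $Q^+$ already written down explicitly. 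Concretely, for $\varepsilon=\varepsilon'=+1$ I would write
\[
r(u_i^+,u_{i+t}^+)=\sum_{k=0}^{t-1}\bigl(q^+_{e_{i+k}^+,u_i^+}-q^+_{e_{i+k}^+,u_{i+t}^+}\bigr),
\]
and then substitute the formula for $q^+_{e_{i+k}^+,u_i^+}$ from Theorem \ref{circular ladder: mpinv}, noting that $q^+_{e_{i+k}^+,u_{i+t}^+}=q^+_{e_{(i+t)+(k-t)}^+,u_{i+t}^+}$ is the same formula evaluated with parameter $k-t$ (which may be negative, so I must use the symmetry relation $q^+_{e_{j-1}^+,u_j^+}=-q^+_{e_j^+,u_j^+}$, i.e.\ the reflection that reverses the index offset, to handle those terms).

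The main computational step is then to evaluate the telescoping-type sums $\sum_{k=0}^{t-1}(2a_{k+1}-a_k)$, $\sum_{k=0}^{t-1}a_{k+1}$, $\sum_{k=0}^{t-1}(2t+1-2k)/(4n)$, and their reflected counterparts. For these I would use the identities of Lemma \ref{sequences}: equation \eqref{s_n=a_1+...+a_n} gives $\sum_{k=1}^{t}a_k=s_t$, equation \eqref{2s_n=a_(n+1)-a_n} converts $a$-differences into $s$'s, and the arithmetic sum $\sum_{k=0}^{t-1}(2t+1-2k)=2t^2$ is elementary, producing the $t/2-t^2/(2n)$ term. The coefficient of $\tfrac{a_{n+1}-a_n}{a_{n+1}+a_n-2}$ should collapse, after combining the $u_i^+$ and $u_{i+t}^+$ contributions, to $-(a_{t+1}+a_t-2)/4$, and the remaining "free" part to $(a_{t+1}-a_t)/4$; this is exactly the $\varepsilon\varepsilon'=+1$ specialization of the claimed formula. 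For the mixed case $\varepsilon\neq\varepsilon'$ I would add the single spoke contribution $2q^+_{f_{i+t},u_i^+}$ (or the appropriate $q^+_{f,\cdot}$ difference) using the second formula of Theorem \ref{circular ladder: mpinv}, which changes the sign pattern precisely to the $\varepsilon\varepsilon'=-1$ case; the symmetry $r(u_i^-,u_{i+t}^-)=r(u_i^+,u_{i+t}^+)$ from the block structure handles $\varepsilon=\varepsilon'=-1$.

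The hard part will be bookkeeping the index shifts and signs when the path wraps around the cycle — in particular, correctly reexpressing $q^+_{e_{i+k}^+,u_{i+t}^+}$ for $k<t$ via the reflection symmetry of $CL_n$, since the formula in Theorem \ref{circular ladder: mpinv} is stated for nonnegative offsets $t=0,\ldots,n-1$ only, and a naive substitution of a negative offset would be meaningless. Once that reflection is pinned down (it sends the offset $s\mapsto n-s$ up to the spoke-sign conventions, or more simply sends $e_{j-1}$ to $-e_j$ relative to the source vertex), the rest is a routine, if lengthy, simplification driven entirely by \eqref{s_n=a_1+...+a_n} and \eqref{2s_n=a_(n+1)-a_n}. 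I would organize the final verification as a short case table over the four sign pairs $(\varepsilon,\varepsilon')$, checking that each collapses to the single displayed formula.
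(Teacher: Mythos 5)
Your plan follows the paper's own proof essentially verbatim: apply Lemma \ref{resistance} along the arc $e_i^+,\ldots,e_{i+t-1}^+$ (plus the spoke in the mixed case), substitute the explicit entries of $Q^+$ from Theorem \ref{circular ladder: mpinv}, and use the reflection symmetry of $CL_n$ to convert the target-vertex sums into negatives of the source-vertex sums --- exactly how the paper handles the negative offsets via $\sum_{s=0}^{t-1}q^+_{e_{n+s-t}^+,u_0^+}=-\sum_{s=0}^{t-1}q^+_{e_s^+,u_0^+}$. The only blemish is the stated identity $\sum_{k=0}^{t-1}(2t+1-2k)=2t^2$, which should read $2\sum_{k=0}^{t-1}(2k+1)=2t^2$; the resulting $\frac{t}{2}-\frac{t^2}{2n}$ term is nevertheless correct.
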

\begin{proof}
First observe that $r(u_i^-,u_{i+t}^-)=r(u_i^+,u_{i+t}^+)=r(u_0^+,u_t^+)$ by Theorem \ref{circular ladder: mpinv} and Lemma \ref{resistance}. We have
\[\sum_{s=0}^{t-1}q^+_{e_s^+,u_0^+}=-\frac{a_{t+1}+a_t-2}{8}\cdot\frac{a_{n+1}-a_n}{a_{n+1}+a_n-2}+\frac{a_{t+1}-a_t}{8}+\frac{t}{4}-\frac{t^2}{4n}\]
and 
\[\sum_{s=0}^{t-1}q^+_{e_s^+,u_0^-}=\frac{a_{t+1}+a_t-2}{8}\cdot\frac{a_{n+1}-a_n}{a_{n+1}+a_n-2}-\frac{a_{t+1}-a_t}{8}+\frac{t}{4}-\frac{t^2}{4n}.\]
Also,
\[\sum_{s=0}^{t-1}q^+_{e_s^+,u_t^+}=\sum_{s=0}^{t-1}q^+_{e_{n+s-t}^+,u_0^+}=-\sum_{s=0}^{t-1}q^+_{e_s^+,u_0^+}\]
and
\[\sum_{s=0}^{t-1}q^+_{e_s^+,u_t^-}=\sum_{s=0}^{t-1}q^+_{e_{n+s-t}^+,u_0^-}=-\sum_{s=0}^{t-1}q^+_{e_s^+,u_0^-}\]
by symmetry of the graph. Let $1\leq i\leq n$ and $0\leq t<n$. Then, by Lemma \ref{resistance},
\begin{align*}
r(u_i^+,u_{i+t}^+)&=r(u_0^+,u_t^+)=2\sum_{s=0}^{t-1}q^+_{e_s^+,u_0^+}\\
&=-\frac{a_{t+1}+a_t-2}{4}\cdot\frac{a_{n+1}-a_n}{a_{n+1}+a_n-2}+\frac{a_{t+1}-a_t}{4}+\frac{t}{2}-\frac{t^2}{2n}
\end{align*}
and
\begin{align*}
r(u_i^+,u_{i+t}^-)&=r(u_0^+,u_t^-)=\sum_{s=0}^{t-1}q^+_{e_s^+,u_0^+}-\sum_{s=0}^{t-1}q^+_{e_s^+,u_t^-}+q^+(f_t,u_0^+)-q^+(f_t,u_t^-)\\
&=\sum_{s=0}^{t-1}q^+_{e_s^+,u_0^+}+\sum_{s=0}^{t-1}q^+_{e_s^+,u_0^-}+q^+(f_t,u_0^+)+q^+(f_0,u_0^+)\\
&=\frac{a_{t+1}+a_t+2}{4}\frac{a_{n+1}-a_n}{a_{n+1}+a_n-2}-\frac{a_{t+1}-a_t}{4}+\frac{t}{2}-\frac{t^2}{2n},
\end{align*}
as required.
\end{proof}
\begin{corollary}\label{circular ladder: Kirchhoff index}
For every $n\geq3$, we have
\[Kf(CL_n)=n^2\cdot\frac{a_{n+1}-a_n}{a_{n+1}+a_n-2}+\frac{1}{6}n(n^2-1)\]
\end{corollary}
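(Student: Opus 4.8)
The plan is to compute the Kirchhoff index directly from the closed-form resistance distances established in Corollary~\ref{circular ladder: resistance}, exploiting the rotational symmetry of $CL_n$ that makes $r(u_i^\varepsilon,u_{i+t}^{\varepsilon'})$ depend only on $t$ and the signs $\varepsilon,\varepsilon'$. First I would write
\[
Kf(CL_n)=\frac12\sum_{i,j,\varepsilon,\varepsilon'}r(u_i^\varepsilon,u_j^{\varepsilon'})
=\frac{n}{2}\sum_{t=0}^{n-1}\sum_{\varepsilon,\varepsilon'=\pm1}{}'\,r(u_0^\varepsilon,u_t^{\varepsilon'}),
\]
being careful with the diagonal terms $t=0$ (where $r(u_0^+,u_0^+)=0$ contributes nothing, while the two spoke terms $r(u_0^+,u_0^-)$ do contribute) and with the fact that each unordered pair on the same cycle at ``distance'' $t$ and at distance $n-t$ is counted consistently by letting $t$ run over $0,\ldots,n-1$ with the $n$ rotations. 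Summing the four sign-choices of the formula in Corollary~\ref{circular ladder: resistance} for fixed $t$, the terms linear in $\varepsilon\varepsilon'$ cancel in pairs, leaving $4\bigl(\tfrac t2-\tfrac{t^2}{2n}\bigr)$ plus a contribution of the form $\bigl(-\tfrac{a_{t+1}+a_t-2}{4}+\tfrac{a_{t+1}+a_t+2}{4}\bigr)\cdot 2\cdot\tfrac{a_{n+1}-a_n}{a_{n+1}+a_n-2}=\tfrac{a_{n+1}-a_n}{a_{n+1}+a_n-2}$ independent of $t$.

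Next I would carry out the two resulting sums over $t=0,\ldots,n-1$. The ``resistive'' part telescopes to $n\cdot\tfrac{a_{n+1}-a_n}{a_{n+1}+a_n-2}$, and multiplying by the overall factor $\tfrac n2\cdot\tfrac12$ (or tracking the combinatorial prefactor explicitly) gives the term $n^2\cdot\tfrac{a_{n+1}-a_n}{a_{n+1}+a_n-2}$. The polynomial part requires $\sum_{t=0}^{n-1}\bigl(\tfrac t2-\tfrac{t^2}{2n}\bigr)=\tfrac{1}{2}\cdot\tfrac{(n-1)n}{2}-\tfrac{1}{2n}\cdot\tfrac{(n-1)n(2n-1)}{6}$, which simplifies to $\tfrac{n(n^2-1)}{12}$; with the prefactors this yields $\tfrac16 n(n^2-1)$, matching the known Kirchhoff index of the cycle-type contribution. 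Assembling the two pieces gives exactly $Kf(CL_n)=n^2\cdot\tfrac{a_{n+1}-a_n}{a_{n+1}+a_n-2}+\tfrac16 n(n^2-1)$.

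An alternative route, which I would mention as a cross-check, is to use the identity $Kf(\G)=n_{\mathrm{tot}}\cdot\tr(L^+)$ for a graph on $n_{\mathrm{tot}}$ vertices together with $\tr(L^+)=\tr\bigl((Q^+)'Q^+\bigr)=\|Q^+\|_F^2$, and to sum the squares of the entries of $Q^+$ from Theorem~\ref{circular ladder: mpinv}; rotational symmetry again reduces this to a single sum over $t$. The main obstacle I anticipate is purely bookkeeping: getting the combinatorial prefactor right when passing from the double sum over all ordered vertex pairs to $n$ copies of a sum over $t$, and correctly handling the $t=0$ spoke contributions versus the vanishing diagonal, so that no factor of $2$ is lost or doubled. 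Once the prefactor is pinned down (most safely by checking against a small case such as $n=3$, i.e. $CL_3=K_{3,3}$'s prism, where $a_4=15$, $a_3=4$, giving $Kf(CL_3)=9\cdot\tfrac{11}{17}+4$), the remaining steps are the two elementary summations above.
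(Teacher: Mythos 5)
Your approach is correct and is exactly the intended derivation (the paper states this corollary without proof, as an immediate consequence of summing the resistance formula of Corollary~\ref{circular ladder: resistance} over all pairs using rotational symmetry), and your final prefactor accounting does land on the stated formula. Be aware, though, that several of your intermediate equalities are internally inconsistent as written --- the four-sign sum of the resistive part is $2\cdot\frac{a_{n+1}-a_n}{a_{n+1}+a_n-2}$ per $t$, not $\frac{a_{n+1}-a_n}{a_{n+1}+a_n-2}$; the overall prefactor is $\frac{n}{2}$, not $\frac{n}{2}\cdot\frac{1}{2}$; $\sum_{t=0}^{n-1}\bigl(\frac{t}{2}-\frac{t^2}{2n}\bigr)=\frac{n^2-1}{12}$, not $\frac{n(n^2-1)}{12}$; and in your $n=3$ check the correct values are $a_3=11$, $a_4=41$ (giving $Kf(CL_3)=9\cdot\frac{30}{50}+4=\frac{47}{5}$), not $a_3=4$, $a_4=15$.
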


It is known from \cite[Theorem F]{djk-mr} that $Kf(\G)=n\tr(L^+(\G))$ for every graph $\G$ of order $n$, where $L(\G)$ denotes the Laplacian matrix of $\G$. Also, from the definition, we know that 
\[r(u,v)=L^+_{u,u}+L^+_{v,v}-L^+_{u,v}-L^+_{v,u}\]
for all $u,v\in V(\G)$, where $L^+$ denotes the Moore-Penrose inverse of $L=L(\G)$ and $r(u,v)$ is the resistance distance between vertices $u$ and $v$ of $\G$. Utilizing the fact that circular ladders are vertex transitive graphs, we obtain the following result immediately.
\begin{corollary}\label{circular ladder: mpinv of laplacian matrix}
Let $L:=L(CL_n)$ be the Laplacian matrix of $CL_n$ and $L^+=[l_{uv}^+]$ be its Moore-Penrose inverse. Then
\[l_{u_i^\varepsilon,u_{i+t}^{\varepsilon'}}=\varepsilon\varepsilon'\frac{a_{t+1}+a_t}{8}\cdot\frac{a_{n+1}-a_n}{a_{n+1}+a_n-2}-\varepsilon\varepsilon'\frac{a_{t+1}-a_t}{8}-\frac{t}{4}+\frac{t^2}{4n}+\frac{1}{24}\parn{n-\frac{1}{n}}\]
for all $1\leq i\leq n$, $0\leq t<n$, and $\varepsilon,\varepsilon'=\pm1$.
\end{corollary}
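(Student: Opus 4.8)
The plan is to combine the two general identities recalled just before the statement with the vertex-transitivity of $CL_n$ and the explicit resistance formulas of Corollary \ref{circular ladder: resistance}. Since $CL_n$ is vertex transitive, all diagonal entries of $L^+$ are equal, say to $\ell$. Taking the trace and using $Kf(CL_n)=2n\,\tr(L^+)=2n\cdot 2n\ell=4n^2\ell$, wait — more carefully, $\tr(L^+)=2n\ell$ and $Kf(CL_n)=|V|\tr(L^+)=2n\cdot 2n\ell$, so $\ell=Kf(CL_n)/(4n^2)$. Substituting the value of $Kf(CL_n)$ from Corollary \ref{circular ladder: Kirchhoff index} gives
\[
\ell=\frac{1}{4}\cdot\frac{a_{n+1}-a_n}{a_{n+1}+a_n-2}+\frac{n^2-1}{24n}=\frac{1}{4}\cdot\frac{a_{n+1}-a_n}{a_{n+1}+a_n-2}+\frac{1}{24}\parn{n-\frac1n},
\]
which is exactly the claimed expression in the case $t=0$, $\varepsilon=\varepsilon'$ (note $a_1+a_0=2$, $a_1-a_0=0$, so the $t=0$ terms vanish and we are left with $\ell$, as it should be).

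Next I would recover the off-diagonal entries from the resistance formula $r(u,v)=L^+_{u,u}+L^+_{v,v}-L^+_{u,v}-L^+_{v,u}$. By vertex-transitivity each row of $L^+$ is a permutation of each other row; moreover $L^+$ is symmetric, and the reflection/rotation symmetries of $CL_n$ force $L^+_{u_i^\varepsilon,u_{i+t}^{\varepsilon'}}$ to depend only on $t$ and on the product $\varepsilon\varepsilon'$ (not on $i$ or on the individual signs), so in particular $L^+_{u,v}=L^+_{v,u}$ for the pairs in question. Hence $r(u_i^\varepsilon,u_{i+t}^{\varepsilon'})=2\ell-2L^+_{u_i^\varepsilon,u_{i+t}^{\varepsilon'}}$, i.e.
\[
L^+_{u_i^\varepsilon,u_{i+t}^{\varepsilon'}}=\ell-\tfrac12\,r(u_i^\varepsilon,u_{i+t}^{\varepsilon'}).
\]
Plugging in the formula for $r(u_i^\varepsilon,u_{i+t}^{\varepsilon'})$ from Corollary \ref{circular ladder: resistance} and the value of $\ell$ above, and simplifying the sign bookkeeping (the $\varepsilon\varepsilon'2$ inside the corollary's numerator recombines with the constant $\ell$-terms), should yield precisely the stated closed form with the $\frac{1}{24}(n-1/n)$ additive constant and the $\varepsilon\varepsilon'$-weighted first two terms together with $-t/4+t^2/(4n)$.

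The only genuinely delicate point is justifying that $L^+_{u_i^\varepsilon,u_{i+t}^{\varepsilon'}}$ really is independent of $i$ and of the individual signs (beyond their product) and that $\tr(L^+)=2n\ell$ — i.e. that the automorphism group of $CL_n$ acts transitively enough. This follows because $CL_n$ admits the rotation $u_i^\varepsilon\mapsto u_{i+1}^\varepsilon$ and the swap $u_i^+\leftrightarrow u_i^-$ as automorphisms, both of which commute with forming $L$ and hence with $L^+=L(\G)^+$ (conjugation of $L$ by a permutation matrix $P$ conjugates $L^+$ by $P$); the rotation kills dependence on $i$, the swap identifies the $(\varepsilon,\varepsilon')=(+,+)$ block with $(-,-)$ and $(+,-)$ with $(-,+)$, and symmetry of $L^+$ finishes the reduction to dependence on $t$ and $\varepsilon\varepsilon'$ alone. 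After that, everything is a substitution: no further structural input is needed, and one only has to check the algebra against the $t=0$ normalization and against Corollary \ref{circular ladder: mpinv} via $l^+_{uu}-l^+_{uv}=\ldots$ as a sanity check. I would present the argument in exactly this order: (1) diagonal entry from trace and Kirchhoff index; (2) symmetry reduction; (3) off-diagonal entries from the resistance identity; (4) algebraic simplification and verification of the $t=0$ case.
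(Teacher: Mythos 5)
Your proposal is correct and follows essentially the same route as the paper, which derives this corollary "immediately" from the identity $Kf(\G)=|V(\G)|\,\tr(L^+)$, the formula $r(u,v)=L^+_{u,u}+L^+_{v,v}-2L^+_{u,v}$, and the vertex-transitivity of $CL_n$ together with Corollaries \ref{circular ladder: resistance} and \ref{circular ladder: Kirchhoff index}; your version merely spells out the details (and your bookkeeping, including $\ell=Kf(CL_n)/(4n^2)$ and the recombination of the $-\varepsilon\varepsilon'2$ term with $\ell$, checks out). The only nit is the parenthetical claim that "the $t=0$ terms vanish": the $(a_1+a_0)/8=1/4$ term does not vanish but exactly reproduces the first summand of $\ell$, which is what your displayed formula correctly shows anyway.
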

\section{M\"obius ladder graph}
Following the same techniques used in the previous section, we shall compute the Moore-Penrose inverse of incidence matrices of M\"{o}bius ladder graphs. Likewise, we obtain the resistance distance matrix and Kirchhoff index of M\"{o}bius ladders. In what follows, $M_n$ denotes the M\"{o}bius ladder graph of order $2n$ and $u_1,\ldots,u_n$ and $v_1,\ldots,v_n$ stand for the vertices of the external and internal cycles of $M_n$, respectively, as in Fig. \ref{Mobius ladder}(a). Also, we may set $u_{i+n}:=v_i$ for all $i=1,\ldots,n$, and that $u_{i+2nk}:=u_i$ for all $1\leq i\leq 2n$ and integers $k$. In what follows, $f_i:=u_iv_i$ ($1\leq i\leq n$) is the $i$-th spoke and $e_i=u_iu_{i+1}$ is the $i$-th edge of the $2n$-cycle $u_1,\ldots,u_{2n}$ ($1\leq i\leq 2n$), see the second drawing in Fig. \ref{Mobius ladder}(b).

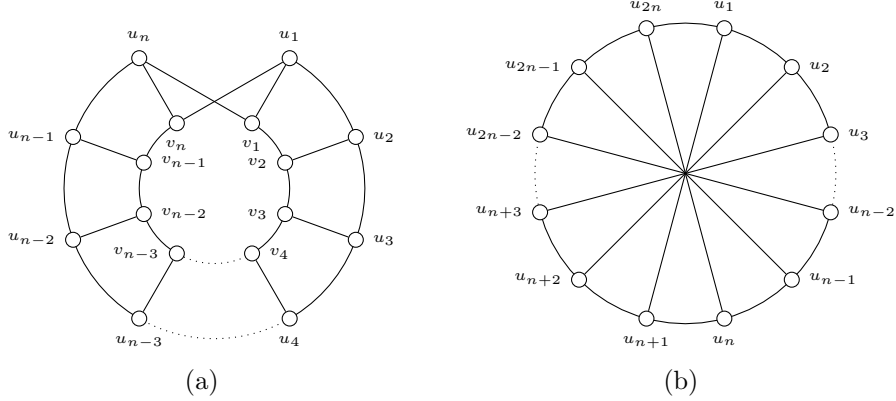
\begin{figure}[h]
\begin{tabular}{ccc}
\begin{tikzpicture}[>=stealth]
\draw [domain=60:-60] plot ({2*cos(\x)}, {2*sin(\x)});
\draw [domain=120:240] plot ({2*cos(\x)}, {2*sin(\x)});
\draw [domain=60:-60] plot ({cos(\x)}, {sin(\x)});
\draw [domain=120:240] plot ({cos(\x)}, {sin(\x)});
\draw [dotted,domain=-60:-120] plot ({2*cos(\x)}, {2*sin(\x)});
\draw [dotted,domain=-60:-120] plot ({cos(\x)}, {sin(\x)});
      
\node [draw,circle,fill=white,inner sep=2pt,label=above:\tiny{$u_1$}] (A) at ({2*cos(60)}, {2*sin(60)}) {};
\node [draw,circle,fill=white,inner sep=2pt,label=right:\tiny{$u_2$}] (B) at ({2*cos(20)}, {2*sin(20)}) {};
\node [draw,circle,fill=white,inner sep=2pt,label=right:\tiny{$u_3$}] (C) at ({2*cos(-20)}, {2*sin(-20)}) {};
\node [draw,circle,fill=white,inner sep=2pt,label=below:\tiny{$u_4$}] (D) at ({2*cos(-60)}, {2*sin(-60)}) {};
\node [draw,circle,fill=white,inner sep=2pt,label=below:\tiny{$u_{n-3}$}] (E) at ({2*cos(-120)}, {2*sin(-120)}) {};
\node [draw,circle,fill=white,inner sep=2pt,label=left:\tiny{$u_{n-2}$}] (F) at ({2*cos(-160)}, {2*sin(-160)}) {};
\node [draw,circle,fill=white,inner sep=2pt,label=left:\tiny{$u_{n-1}$}] (G) at ({2*cos(-200)}, {2*sin(-200)}) {};
\node [draw,circle,fill=white,inner sep=2pt,label=above:\tiny{$u_n$}] (H) at ({2*cos(-240)}, {2*sin(-240)}) {};

\node [draw,circle,fill=white,inner sep=2pt,label=below:\tiny{$v_1$}] (I) at ({cos(60)}, {sin(60)}) {};
\node [draw,circle,fill=white,inner sep=2pt,label=left:\tiny{$v_2$}] (J) at ({cos(20)}, {sin(20)}) {};
\node [draw,circle,fill=white,inner sep=2pt,label=left:\tiny{$v_3$}] (K) at ({cos(-20)}, {sin(-20)}) {};
\node [draw,circle,fill=white,inner sep=2pt,label=right:\tiny{$v_4$}] (L) at ({cos(-60)}, {sin(-60)}) {};
\node [draw,circle,fill=white,inner sep=2pt,label=left:\tiny{$v_{n-3}$}] (M) at ({cos(-120)}, {sin(-120)}) {};
\node [draw,circle,fill=white,inner sep=2pt,label=right:\tiny{$v_{n-2}$}] (N) at ({cos(-160)}, {sin(-160)}) {};
\node [draw,circle,fill=white,inner sep=2pt,label=right:\tiny{$v_{n-1}$}] (O) at ({cos(-200)}, {sin(-200)}) {};
\node [draw,circle,fill=white,inner sep=2pt,label=below:\tiny{$v_n$}] (P) at ({cos(-240)}, {sin(-240)}) {};

\draw (A)--(I);
\draw (B)--(J);
\draw (C)--(K);
\draw (D)--(L);
\draw (E)--(M);
\draw (F)--(N);
\draw (G)--(O);
\draw (H)--(P);
\draw (A)--(P);
\draw (I)--(H);
\end{tikzpicture}
&&
\begin{tikzpicture}
\draw [domain=15:165] plot ({2*cos(\x)}, {2*sin(\x)});
\draw [domain=195:345] plot ({2*cos(\x)}, {2*sin(\x)});
\draw [dotted, domain=15:-15] plot ({2*cos(\x)}, {2*sin(\x)});
\draw [dotted, domain=165:195] plot ({2*cos(\x)}, {2*sin(\x)});

\node [draw,circle,fill=white,inner sep=2pt,label=above:\tiny{$u_1$}] (A) at ({2*cos(75)}, {2*sin(75)}) {};
\node [draw,circle,fill=white,inner sep=2pt,label=right:\tiny{$u_2$}] (B) at ({2*cos(45)}, {2*sin(45)}) {};
\node [draw,circle,fill=white,inner sep=2pt,label=right:\tiny{$u_3$}] (C) at ({2*cos(15)}, {2*sin(15)}) {};
\node [draw,circle,fill=white,inner sep=2pt,label=right:\tiny{$u_{n-2}$}] (D) at ({2*cos(345)}, {2*sin(345)}) {};
\node [draw,circle,fill=white,inner sep=2pt,label=right:\tiny{$u_{n-1}$}] (E) at ({2*cos(315)}, {2*sin(315)}) {};
\node [draw,circle,fill=white,inner sep=2pt,label=below:\tiny{$u_n$}] (F) at ({2*cos(285)}, {2*sin(285)}) {};
\node [draw,circle,fill=white,inner sep=2pt,label=below:\tiny{$u_{n+1}$}] (G) at ({2*cos(255)}, {2*sin(255)}) {};
\node [draw,circle,fill=white,inner sep=2pt,label=left:\tiny{$u_{n+2}$}] (H) at ({2*cos(225)}, {2*sin(225)}) {};
\node [draw,circle,fill=white,inner sep=2pt,label=left:\tiny{$u_{n+3}$}] (I) at ({2*cos(195)}, {2*sin(195)}) {};
\node [draw,circle,fill=white,inner sep=2pt,label=left:\tiny{$u_{2n-2}$}] (J) at ({2*cos(165)}, {2*sin(165)}) {};
\node [draw,circle,fill=white,inner sep=2pt,label=left:\tiny{$u_{2n-1}$}] (K) at ({2*cos(135)}, {2*sin(135)}) {};
\node [draw,circle,fill=white,inner sep=2pt,label=above:\tiny{$u_{2n}$}] (L) at ({2*cos(105)}, {2*sin(105)}) {};

\draw (A)--(G);
\draw (B)--(H);
\draw (C)--(I);
\draw (D)--(J);
\draw (E)--(K);
\draw (F)--(L);
\end{tikzpicture}\\
(a)&&(b)
\end{tabular}
\caption{M\"obius ladder graph $M_n$}
\label{Mobius ladder}
\end{figure}
\begin{theorem}\label{Mobius ladder: mpinv}
Let $M_n$ be the M\"{o}bius ladder graph of order $2n$ and $Q$ be its incidence matrix. Then the Moore-Penrose inverse of $Q=[q^+_{e,v}]$ is given by
\begin{align*}
q^+_{e_{i+t},u_i}&=-\frac{2a_{t+1}-a_t}{4}\cdot\frac{a_{n+1}-a_n}{a_{n+1}+a_n+2}+\frac{a_{t+1}+1}{4}-\frac{2t+1}{4n},\\
q^+_{e_{n+i+t},u_i}&=\frac{2a_{t+1}-a_t}{4}\cdot\frac{a_{n+1}-a_n}{a_{n+1}+a_n+2}-\frac{a_{t+1}-1}{4}-\frac{2t+1}{4n},\\
\varepsilon_{i+t} q^+_{f_{i+t},u_i}&=\frac{a_{t+1}+a_t}{4}\cdot\frac{a_{n+1}-a_n}{a_{n+1}+a_n+2}-\frac{a_{t+1}-a_t}{4}
\end{align*}
for all $t=0,\ldots,n-1$.
\end{theorem}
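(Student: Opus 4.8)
The plan is to run the argument of Theorem \ref{circular ladder: mpinv} essentially verbatim, isolating the one place at which the M\"obius twist alters the bookkeeping. Fix a vertex $u_i$ of the external cycle. By the rotation $u_k\mapsto u_{k+1}$, the reflection of $M_n$ fixing $u_i$, and the rail-swap $u_k\mapsto u_{k+n}$ (which carries the column indexed by $u_i$ to the one indexed by $v_i$), it suffices to compute the entries of $Q^+$ in the single column indexed by $u_i$. Let $\varepsilon_j\in\{\pm1\}$ be the sign for which the spoke through the vertex $u_j$, appropriately oriented, equals $\varepsilon_j$ times one of $f_1,\dots,f_n$; thus $q^+_{f_{j+n},u_i}=-q^+_{f_j,u_i}$ and $e_{j+2n}=e_j$. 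Put $X_j:=(q^+_{e_j,u_i},\,q^+_{e_{j+n},u_i},\,\varepsilon_j q^+_{f_j,u_i})^T$. Applying \cite[Lemma 2.1]{aa-rbb-ee} to the edge cut-sets at the degree-three vertices $u_{j+1}$ and $v_{j+1}$, and using that the incidence vector of the four-cycle on $\{u_j,u_{j+1},v_{j+1},v_j\}$ lies in the left null-space of $Q^+$, yields exactly the three scalar relations appearing in the proof of Theorem \ref{circular ladder: mpinv}. Hence $X_{j+1}=CX_j+D$ with the same $C$ and $D=-\frac1{2n}(1,1,0)^T$ as there, so that $X_{i+t}=C^tX_i+(I+C+\cdots+C^{t-1})D$ for $0\le t\le n-1$, with the closed forms for $C^t$ and $I+C+\cdots+C^{t-1}$ quoted from that proof. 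Everything therefore reduces to determining the initial vector $X_i$.

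This is where the twist enters. Advancing the recursion by $n$ steps does not return to the starting rung but swaps the two rails: from $e_{i+2n}=e_i$, $f_{i+n}=-f_i$ as oriented edges, and $\varepsilon_{i+n}=-\varepsilon_i$, one gets $X_{i+n}=PX_i$, where $P$ is the permutation matrix interchanging the first two coordinates and fixing the third. On the other hand, iterating $X_{j+1}=CX_j+D$ from $j=i$ up to $j=i+n-1$ one must account for the Kronecker term $\delta_{v_{i+n},u_i}=1$ occurring at the single wrap-around step $j=i+n-1$, since $v_{i+n}=u_{i+2n}=u_i$; this turns the last inhomogeneity into $D+(0,1,0)^T$ and gives $X_{i+n}=C^nX_i+(I+C+\cdots+C^{n-1})D+(0,1,0)^T$. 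Equating the two expressions for $X_{i+n}$ produces a linear system for $X_i$. Since $(1,1,0)^T$ is the common eigenvector of $C$ and $P$ with eigenvalue $1$, the matrix $C^n-P$ is singular with one-dimensional kernel $\langle(1,1,0)^T\rangle$, so $X_i$ is determined only up to this free direction; that last scalar is fixed by the identity $\sum_{j=1}^{2n}q^+_{e_j,u_i}=0$, valid because the outer cycle $e_1+\cdots+e_{2n}$ lies in $\ker Q$ and hence is orthogonal to every column of $Q^+$. Pairing this identity with the left eigenvector $(1,1,0)$ of $C$ rewrites it as $q^+_{e_i,u_i}+q^+_{e_{i+n},u_i}=\frac12-\frac1{2n}$.

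Solving the resulting consistent system for the three components of $X_i$ --- most transparently by diagonalizing $C$ over $\mathbb{Q}(\sqrt3)$ through the Binet formulas of Lemma \ref{sequences}, whereupon $C^n-P$ acts essentially as a scalar on the $\lambda$- and $\lambda^{-1}$-eigenlines and the denominator $a_{n+1}+a_n+2=(2+\sqrt3)^n+(2-\sqrt3)^n+2$ emerges --- yields $q^+_{f_i,u_i}=\frac12\cdot\frac{a_{n+1}-a_n}{a_{n+1}+a_n+2}$ together with the individual values of $q^+_{e_i,u_i}$ and $q^+_{e_{i+n},u_i}$, which are precisely the $t=0$ cases of the claimed formulas. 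Feeding $X_i$ back into $X_{i+t}=C^tX_i+(I+C+\cdots+C^{t-1})D$ and reading off coordinates with the closed forms for $C^t$ and the partial sums then gives the three displayed formulas for all $t=0,\dots,n-1$; the columns indexed by the internal vertices $v_i$ follow from the rail-swap symmetry. I expect the principal difficulty to be conceptual rather than computational: pinning down the twisted closure correctly --- in particular not overlooking the lone extra Kronecker term at the wrap-around step, which is exactly what renders the two expressions for $X_{i+n}$ consistent --- and noticing that the singularity of $C^n-P$ forces one to invoke the outer-cycle relation to fix the remaining constant. After that, the manipulations are of the same Binet-formula type already carried out for the ladder and circular ladder.
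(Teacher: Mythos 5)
Your overall strategy is sound and genuinely different from the paper's: instead of importing the spanning-tree counts $s(M_n)=s(CL_n)+2n$ and $s(M_n/f_i)=s(CL_n/f_i)=ns(L_n)$ to pin down the spoke entry via $r(u_i,v_i)=s(M_n/f_i)/s(M_n)$ (which is what the paper does, working moreover with a $4\times4$ transfer matrix rather than your reduced $3\times3$ one), you close the transfer recursion around the M\"obius band and extract the initial vector from the resulting monodromy equation together with the outer-cycle relation. This is attractive --- it would in fact re-derive $s(M_n)$ rather than assume it --- and your identification of the lone wrap-around Kronecker term and of the normalization $q^+_{e_i,u_i}+q^+_{e_{i+n},u_i}=\tfrac12-\tfrac1{2n}$ are both correct, as is the reduction from four coordinates to three.

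There is, however, a concrete sign error in the closure condition, and it is not harmless. In the normalization of the third coordinate that makes the one-step transfer matrix constantly equal to $C$ (namely $z_j=\varepsilon_jq^+_{f_j,u_i}$ with $f_{j+n}$ the \emph{same} oriented spoke as $f_j$ and $\varepsilon_{j+n}=-\varepsilon_j$), the spoke coordinate is \emph{anti}-periodic under the half-turn: $z_{i+n}=-z_i$. So the correct monodromy is $X_{i+n}=PX_i$ with $P$ swapping the first two coordinates and \emph{negating} the third, not fixing it. (If you instead insist that the third coordinate be periodic, as your stated conventions force, then the one-step matrix is no longer uniformly $C$: at the seam the third row of the recursion flips to $(1,-1,-1)$.) This matters because with your $P$ the system $(C^n-P)X_i=-(I+\cdots+C^{n-1})D-(0,1,0)^T$ is still consistent --- the right-hand side $(\tfrac12,-\tfrac12,0)^T$ is orthogonal to the left kernel spanned by $(1,1,0)$ --- but it yields the wrong answer: eliminating $x_1-x_2$ gives $x_3\bigl((a_{n+1}+1)(a_n-1)-2s_n^2\bigr)=s_n$, and since $2s_n^2=a_{n+1}a_n-1$ the bracket equals $-2s_n$, forcing $q^+_{f_i,u_i}=-\tfrac12$ rather than the claimed value. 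With the corrected $P$ the bracket becomes $(a_{n+1}+1)(a_n+1)-2s_n^2=a_{n+1}+a_n+2$, which is exactly where the M\"obius denominator comes from; after that correction the remainder of your argument goes through as described.
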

\begin{proof}
First observe that $s(M_n/f_i)=s(CL_n/f_i)$ and that by \cite[p. 41]{mr}
\[s(M_n)=s(CL_n)+2n=\frac{n}{2}(a_{n+1}+a_n+2).\]
Therefore, by Lemma \ref{resistance} and Theorem \ref{circular ladder / f: spanning trees}, 
\[q^+_{f_i,u_i}=-q^+_{f_i,v_i}=\frac{r(u_i,v_i)}{2}=\frac{s(M_n/f_i)}{2s(M_n)}=\frac{s_n}{a_{n+1}+a_n+2}.\]
Consider the edge cut-set $\{e_j,f_{j+1},e_{j+1}\}$. From \cite[Lemma 2.1]{aa-rbb-ee}, it follows that
\begin{equation}\label{M_n: q^+_(e_j^+,u_i^+)}
q^+_{e_{j+1},u_i}=\delta_{i,j+1}-\frac{1}{2n}+q^+_{e_j,u_i}-\varepsilon_{j+1} q^+_{f_{j+1},u_i},
\end{equation}
where $\varepsilon_j=(-1)^{[(j-1)/n]}$. Since $q^+_{e_i,u_i}=-q^+_{e_{i-1},u_i}$ by the symmetry of the graph, equation \eqref{M_n: q^+_(e_j^+,u_i^+)} yields the initial values
\[q^+_{e_i,u_i}=\frac{1}{2}\parn{1-\frac{1}{2n}-\varepsilon_i q^+_{f_i,u_i}}\]
for all $i=1,\ldots,2n$. Likewise, from $f_{n+i}=f_i$ and $q^+_{e_{n+i},u_i}=-q^+_{e_{n+i-1},u_i}$ it follows  that
\[q^+_{e_{n+i},u_i}=\frac{1}{2}\parn{-\frac{1}{2n}+\varepsilon_i q^+_{f_i,u_i}}\]
for all $i=1,\ldots,2n$. Notice that the values of $q^+_{e_{i+t},u_i}$ are independent of $i$ by the symmetry of the graph, so we may restrict ourselves to $i$ with $1\leq i\leq n$ and assume $\varepsilon_i=1$.

On the other hand, the incidence vector of the cycle $u_j,u_{j+1},u_{n+j},u_{n+j+1}$ lies in the left null space of $Q^+$, from which it follows that
\[q^+_{e_j,u_i}+\varepsilon_{j+1}q^+_{f_{j+1},u_i}-q^+_{e_{n+j},u_i}-\varepsilon_jq^+_{f_j,u_i}=0\]
or
\begin{equation}\label{M_n: q^+_(f_(j+1),u_i^+)}
\varepsilon_{j+1}q^+_{f_{j+1},u_i}=q^+_{e_{n+j},u_i}-q^+_{e_j,u_i}+\varepsilon_jq^+_{f_j,u_i}.
\end{equation}
Let 
\[C:=\begin{pmatrix}
2&-1&-1&0\\
-1&2&0&-1\\
-1&1&1&0\\
1&-1&0&1\\
\end{pmatrix}
\quad\text{and}\quad
D:=-\frac{1}{2n}\begin{pmatrix}
1\\1\\0\\0
\end{pmatrix}.\]
If $X_j$ denotes the column matrix $(q^+_{e_j,u_i}\ q^+_{e_{n+j},u_i}\ \varepsilon_j q^+_{f_j,u_i}\ \varepsilon_{n+j} q^+_{f_{n+j},u_i})^T$ for $i\leq j<i+n$, then from the equalities \eqref{M_n: q^+_(e_j^+,u_i^+)} and \eqref{M_n: q^+_(f_(j+1),u_i^+)}, it follows that
\[X_{j+1}=CX_j+D\]
for all $i\leq j<n+i-1$. Using induction on $t\geq0$, one can prove that
\[C^t=\begin{pmatrix}
\frac{a_{t+1}+1}{2}&-\frac{a_{t+1}-1}{2}&-\frac{s_t+t}{2}&\frac{s_t-t}{2}\\
-\frac{a_{t+1}-1}{2}&\frac{a_{t+1}+1}{2}&\frac{s_t-t}{2}&-\frac{s_t+t}{2}\\
-s_t&s_t&\frac{a_t+1}{2}&-\frac{a_t-1}{2}\\
s_t&-s_t&-\frac{a_t-1}{2}&\frac{a_t+1}{2}\\
\end{pmatrix}\]
and
\[I+C+\cdots+C^{t-1}=\begin{pmatrix}
\frac{s_t+t}{2}&-\frac{s_t-t}{2}&-\frac{a_t+t^2-t-1}{4}&\frac{a_t-t^2+t-1}{4}\\
-\frac{s_t-t}{2}&\frac{s_t+t}{2}&\frac{a_t-t^2+t-1}{4}&-\frac{a_t+t^2-t-1}{4}\\
-\frac{a_t-1}{2}&\frac{a_t-1}{2}&\frac{s_{t-1}+t+1}{2}&-\frac{s_{t-1}-(t-1)}{2}\\
\frac{a_t-1}{2}&-\frac{a_t-1}{2}&-\frac{s_{t-1}-(t-1)}{2}&\frac{s_{t-1}+t+1}{2}\\
\end{pmatrix}.\]
Now, from the equality
\[X_{i+t}=C^tX_i+(I+C+\cdots+C^{t-1})D,\]
it follows that
\begin{align*}
q^+_{e_{i+t},u_i}&=-\frac{2a_{t+1}-a_t}{4}\cdot\frac{a_{n+1}-a_n}{a_{n+1}+a_n+2}+\frac{a_{t+1}+1}{4}-\frac{2t+1}{4n},\\
q^+_{e_{n+i+t},u_i}&=\frac{2a_{t+1}-a_t}{4}\cdot\frac{a_{n+1}-a_n}{a_{n+1}+a_n+2}-\frac{a_{t+1}-1}{4}-\frac{2t+1}{4n},\\
\varepsilon_{i+t} q^+_{f_{i+t},u_i}&=\frac{a_{t+1}+a_t}{4}\cdot\frac{a_{n+1}-a_n}{a_{n+1}+a_n+2}-\frac{a_{t+1}-a_t}{4}
\end{align*}
for all $t=0,\ldots,n-1$, as required.
\end{proof}
\begin{corollary}\label{Mobius ladder: resistance}
Let $1\leq i\leq 2n$ and $0\leq t\leq n$. Then
\[r(u_i,u_{i+t})=-\frac{a_{t+1}+a_t-2}{4}\cdot\frac{a_{n+1}-a_n}{a_{n+1}+a_n+2}+\frac{a_{t+1}-a_t}{4}+\frac{t}{2}-\frac{t^2}{2n}.\]
\end{corollary}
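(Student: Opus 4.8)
The plan is to apply Lemma~\ref{resistance} to the path $e_i,e_{i+1},\ldots,e_{i+t-1}$ running from $u_i$ to $u_{i+t}$ along the $2n$-cycle $u_1,\ldots,u_{2n}$, using the explicit formulas for $q^+_{e_{i+t},u_i}$ just obtained in Theorem~\ref{Mobius ladder: mpinv}. By the symmetry (vertex-transitivity along the $2n$-cycle, together with the $\varepsilon_j$ sign bookkeeping), it suffices to treat $i=1$ with $\varepsilon_1=1$, so that $r(u_i,u_{i+t})=r(u_1,u_{1+t})$. Lemma~\ref{resistance} then gives
\[r(u_1,u_{1+t})=\sum_{s=0}^{t-1}\bigl(q^+_{e_s,u_1}-q^+_{e_s,u_{1+t}}\bigr).\]

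First I would compute $\sum_{s=0}^{t-1}q^+_{e_s,u_1}$. Writing $e_s=e_{1+(s-1)}$ and plugging in the Theorem's formula with the shift parameter $s-1$ (and handling $s=0$, i.e.\ $e_0=e_{2n}$, via $q^+_{e_0,u_1}=-q^+_{e_1,u_1}$ from the graph symmetry), the sum telescopes into three pieces: a sum of $(2a_{\cdot}-a_{\cdot})$ terms times the constant $\tfrac{a_{n+1}-a_n}{a_{n+1}+a_n+2}$, a sum of $a_{\cdot}$ terms, and an arithmetic sum $\sum \tfrac{2\cdot+1}{4n}$ giving a $t^2$-type contribution. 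Using the identity $\sum_{i=1}^{m}a_i=s_m$ from \eqref{s_n=a_1+...+a_n} and $2s_m=a_{m+1}-a_m$ from \eqref{2s_n=a_(n+1)-a_n}, these collapse to closed forms; the computation is essentially the one already carried out for the circular ladder in the proof of Corollary~\ref{circular ladder: resistance}, and I expect to get
\[\sum_{s=0}^{t-1}q^+_{e_s,u_1}=-\frac{a_{t+1}+a_t-2}{8}\cdot\frac{a_{n+1}-a_n}{a_{n+1}+a_n+2}+\frac{a_{t+1}-a_t}{8}+\frac{t}{4}-\frac{t^2}{4n}.\]

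Next I would compute $\sum_{s=0}^{t-1}q^+_{e_s,u_{1+t}}$. Here the key observation, exactly as in the circular case, is that running the path backwards from $u_{1+t}$ we have $q^+_{e_s,u_{1+t}}=q^+_{e_{s-t},u_{1+t}}$ shifted appropriately, and by the rotational symmetry of $M_n$ this equals $-\,q^+_{e_s,u_1}$ after re-indexing; concretely $\sum_{s=0}^{t-1}q^+_{e_s,u_{1+t}}=-\sum_{s=0}^{t-1}q^+_{e_s,u_1}$. Therefore $r(u_1,u_{1+t})=2\sum_{s=0}^{t-1}q^+_{e_s,u_1}$, which upon doubling the displayed closed form gives exactly the claimed expression
\[r(u_i,u_{i+t})=-\frac{a_{t+1}+a_t-2}{4}\cdot\frac{a_{n+1}-a_n}{a_{n+1}+a_n+2}+\frac{a_{t+1}-a_t}{4}+\frac{t}{2}-\frac{t^2}{2n}.\]

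The main obstacle is bookkeeping rather than conceptual: I must be careful with the index shifts in $q^+_{e_{i+t},u_i}$ (the $t$ there is a displacement, not an absolute index), with the wrap-around at $s=0$ where $e_0=e_{2n}$ and the sign flip from symmetry kicks in, and — unlike the circular ladder — with the $\varepsilon_j=(-1)^{\lfloor(j-1)/n\rfloor}$ signs attached to spokes, though these never actually appear in the $e$-sums since we only traverse the cycle edges $e_s$ and restrict to the range where $\varepsilon$ is constant. One should also double-check the endpoint case $t=n$ (allowed in the statement), where $u_{i+n}=v_i$ and the path is half the $2n$-cycle; the formula should still hold by continuity of the algebra, but it is worth verifying that the symmetry argument $\sum q^+_{e_s,u_{1+t}}=-\sum q^+_{e_s,u_1}$ remains valid there.
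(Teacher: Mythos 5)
Your proposal is correct and follows essentially the same route as the paper: apply Lemma~\ref{resistance} along the cycle edges $e_i,\ldots,e_{i+t-1}$, invoke the graph symmetry to get $\sum_{j=0}^{t-1}q^+_{e_{i+j},u_{i+t}}=-\sum_{j=0}^{t-1}q^+_{e_{i+j},u_i}$ so that $r(u_i,u_{i+t})=2\sum_{j=0}^{t-1}q^+_{e_{i+j},u_i}$, and then evaluate the sum from Theorem~\ref{Mobius ladder: mpinv} using identities \eqref{s_n=a_1+...+a_n} and \eqref{2s_n=a_(n+1)-a_n}; your intermediate closed form for the half-sum is exactly what this evaluation yields.
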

\begin{proof}
The result follows from Lemma \ref{resistance} and Theorem \ref{Mobius ladder: mpinv} in conjunction with the fact that
\[\sum_{j=0}^{t-1}q^+_{e_{i+j},u_{i+t}}=-\sum_{j=0}^{t-1}q^+_{e_{i+j},u_i}\]
by the symmetry of the graph.
\end{proof}

Theorem \ref{Mobius ladder: resistance} in conjunction with equation \eqref{(a_(n+1)+a_n)^2=3(a_(n+1)-a_n)^2+4} yields
\begin{corollary}\label{Mobius ladder: Kirchhoff index}
For every $n\geq3$, we have
\[Kf(CL_n)=n^2\cdot\frac{a_{n+1}-a_n}{a_{n+1}+a_n+2}+\frac{1}{6}n(n^2-1).\]
\end{corollary}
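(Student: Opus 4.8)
We prove the displayed identity, which (correcting an evident typo: as printed it says $CL_n$, but $Kf(CL_n)$, with $a_{n+1}+a_n-2$ in the denominator, is Corollary~\ref{circular ladder: Kirchhoff index}) concerns the M\"obius ladder, so the target is $Kf(M_n)=n^2\cdot\frac{a_{n+1}-a_n}{a_{n+1}+a_n+2}+\frac16 n(n^2-1)$. Write $P:=a_{n+1}+a_n$, $M:=a_{n+1}-a_n$, and $\rho:=M/(P+2)$; recall that $2s_n=M$ by \eqref{2s_n=a_(n+1)-a_n} and $P^2=3M^2+4$ by \eqref{(a_(n+1)+a_n)^2=3(a_(n+1)-a_n)^2+4}. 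The plan is first to exploit vertex-transitivity. Since the rotation $u_j\mapsto u_{j+1}$ is an automorphism of $M_n$, the quantity $\sum_{j=1}^{2n}r(u_i,u_j)$ does not depend on $i$; calling this common value $S$, one has $Kf(M_n)=\tfrac12\cdot 2n\cdot S=nS$, so it suffices to compute $S=\sum_{t=0}^{2n-1}r(u_1,u_{1+t})$. Corollary~\ref{Mobius ladder: resistance} supplies $r(u_1,u_{1+t})$ for $0\le t\le n$, while for $n<t\le 2n-1$ rotating by $t$ (and using $u_{j+2n}=u_j$) gives $r(u_1,u_{1+t})=r(u_1,u_{1+(2n-t)})$ with $1\le 2n-t\le n-1$. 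Hence $S=r(u_1,u_{1+n})+2\sum_{t=1}^{n-1}r(u_1,u_{1+t})$.

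Next I would substitute the closed form of Corollary~\ref{Mobius ladder: resistance}. At $t=n$ the polynomial part $\tfrac n2-\tfrac{n^2}{2n}$ vanishes and the rational part simplifies: $r(u_1,u_{1+n})=-\tfrac{P-2}{4}\cdot\tfrac{M}{P+2}+\tfrac M4=\tfrac M4\bigl(1-\tfrac{P-2}{P+2}\bigr)=\rho$. For the remaining sum, split it into its four natural pieces: the telescoping sums $\sum_{t=1}^{n-1}(a_{t+1}\pm a_t)$ are evaluated by \eqref{s_n=a_1+...+a_n} and \eqref{2s_n=a_(n+1)-a_n} (with the auxiliary identity $s_n+s_{n-1}=(3M-P)/2$, immediate from the $(4,1)$-recursion), and $\sum t$, $\sum t^2$ are elementary. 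The only nonroutine point is the coefficient of $\rho$: using $3M\rho=\tfrac{3M^2}{P+2}=\tfrac{P^2-4}{P+2}=P-2$ — which is precisely identity \eqref{(a_(n+1)+a_n)^2=3(a_(n+1)-a_n)^2+4} — all the $a_n$-terms cancel and the rational contribution collapses to a multiple of $\rho$, giving $\sum_{t=1}^{n-1}r(u_1,u_{1+t})=\tfrac{n-1}{2}\rho+\tfrac{n(n-1)}{4}-\tfrac{(n-1)(2n-1)}{12}$. Combining, $S=\rho+(n-1)\rho+\bigl(\tfrac{n(n-1)}{2}-\tfrac{(n-1)(2n-1)}{6}\bigr)=n\rho+\tfrac{n^2-1}{6}$, whence $Kf(M_n)=nS=n^2\rho+\tfrac16 n(n^2-1)$.

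The main obstacle is the algebraic bookkeeping in the last step — in particular, recognizing that \eqref{(a_(n+1)+a_n)^2=3(a_(n+1)-a_n)^2+4} is exactly the relation that rewrites $3M^2/(P+2)$ as $P-2$; this is what makes all the $a_n$-dependence disappear and leaves the clean coefficient $n$ in front of $\rho=(a_{n+1}-a_n)/(a_{n+1}+a_n+2)$. Everything else is routine summation, entirely parallel to the circular ladder computation (Corollary~\ref{circular ladder: Kirchhoff index}), the only change being the sign $+2$ versus $-2$ in the denominator.
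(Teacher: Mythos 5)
Your proof is correct and follows exactly the route the paper indicates but does not write out: it sums the resistance distances of Corollary \ref{Mobius ladder: resistance} over one vertex orbit (using vertex-transitivity and the reflection $t\mapsto 2n-t$) and invokes identity \eqref{(a_(n+1)+a_n)^2=3(a_(n+1)-a_n)^2+4} to collapse the rational part, and your reading of the statement as $Kf(M_n)$ (the ``$CL_n$'' being a typo) is confirmed by the $+2$ in the denominator. I checked the bookkeeping --- in particular $s_n+s_{n-1}=(3M-P)/2$, the evaluation $r(u_1,u_{1+n})=\rho$, and the cancellation $P\rho-M=-2\rho$ that leaves the coefficient $\tfrac{n-1}{2}$ of $\rho$ --- and it all works out to $S=n\rho+\tfrac{n^2-1}{6}$ as you claim.
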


Finally, the Moore-Penrose inverse of the Laplacian matrix of $M_n$ can be obtained analogous to Corollary \ref{circular ladder: mpinv of laplacian matrix}.
\begin{corollary}\label{Mobius ladder: mpinv of laplacian matrix}
Let $L:=L(M_n)$ be the Laplacian matrix of $M_n$ and $L^+=[l_{uv}^+]$ be its Moore-Penrose inverse. Then
\[l^+_{u_i,u_{i+t}}=\frac{a_{t+1}+a_t}{8}\cdot\frac{a_{n+1}-a_n}{a_{n+1}+a_n+2}-\frac{a_{t+1}-a_t}{8}-\frac{t}{4}+\frac{t^2}{4n}+\frac{1}{24}\parn{n-\frac{1}{n}}\]
for all $1\leq i\leq 2n$ and $0\leq t<n$.
\end{corollary}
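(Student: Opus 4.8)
The plan is to follow the proof of Corollary~\ref{circular ladder: mpinv of laplacian matrix} essentially verbatim, replacing the circular ladder by the M\"obius ladder. The only structural input needed is that $M_n$ is vertex-transitive (it is the circulant graph on $2n$ vertices with connection set $\{\pm1,n\}$): since $L:=L(M_n)$ is symmetric, its Moore-Penrose inverse $L^+$ is symmetric as well, and since every automorphism of $M_n$ is realized by a permutation matrix commuting with $L$, hence with $L^+$, the diagonal of $L^+$ is constant. Write $c$ for this common value $l^+_{u,u}$.

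To pin down $c$, I would use the identity $Kf(\G)=|V(\G)|\cdot\tr\parn{L^+(\G)}$ recalled before the statement. Applied to $M_n$, which has $2n$ vertices and constant diagonal, it gives $Kf(M_n)=2n\cdot\tr(L^+)=2n\cdot 2nc=4n^2c$, so $c=Kf(M_n)/(4n^2)$. Substituting the value of $Kf(M_n)$ from Corollary~\ref{Mobius ladder: Kirchhoff index} and using $\frac{1}{4n^2}\cdot\frac16 n(n^2-1)=\frac{1}{24}\parn{n-\frac1n}$ yields
\[c=\frac14\cdot\frac{a_{n+1}-a_n}{a_{n+1}+a_n+2}+\frac{1}{24}\parn{n-\frac1n},\]
which is exactly the asserted formula at $t=0$ (note $a_1=a_0=1$, so $\frac{a_1+a_0}{8}=\frac14$ and $\frac{a_1-a_0}{8}=0$).

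For the off-diagonal entries I would invoke the defining relation $r(u,v)=l^+_{u,u}+l^+_{v,v}-l^+_{u,v}-l^+_{v,u}$. Taking $u=u_i$ and $v=u_{i+t}$ and using the symmetry of $L^+$ together with $l^+_{u_i,u_i}=l^+_{u_{i+t},u_{i+t}}=c$ gives $r(u_i,u_{i+t})=2c-2l^+_{u_i,u_{i+t}}$, that is, $l^+_{u_i,u_{i+t}}=c-\tfrac12\,r(u_i,u_{i+t})$, valid for every $t$. Substituting the formula for $r(u_i,u_{i+t})$ from Corollary~\ref{Mobius ladder: resistance} and collecting the two multiples of $\frac{a_{n+1}-a_n}{a_{n+1}+a_n+2}$ via $\frac14+\frac{a_{t+1}+a_t-2}{8}=\frac{a_{t+1}+a_t}{8}$, the remaining terms $-\frac{a_{t+1}-a_t}{8}$, $-\frac{t}{4}$, $\frac{t^2}{4n}$, $\frac{1}{24}\parn{n-\frac1n}$ fall out directly, giving precisely the claimed expression.

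There is no genuinely hard step; the proof is a short computation once vertex-transitivity is in place. The two points that require a little care are the justification that the diagonal of $L^+$ is constant (from the transitive automorphism group) and the bookkeeping of signs in the final simplification. One should also keep in mind that the quantity computed in Corollary~\ref{Mobius ladder: Kirchhoff index} is $Kf(M_n)$, notwithstanding that its displayed left-hand side reads $Kf(CL_n)$.
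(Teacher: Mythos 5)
Your proposal is correct and is precisely the argument the paper intends: the paper gives no explicit proof here, only the remark that the result follows as in Corollary~\ref{circular ladder: mpinv of laplacian matrix}, i.e.\ from vertex-transitivity (constant diagonal of $L^+$), the identity $Kf(\G)=|V(\G)|\tr(L^+)$ to fix that constant, and $r(u,v)=l^+_{u,u}+l^+_{v,v}-2l^+_{u,v}$ together with Corollary~\ref{Mobius ladder: resistance} for the off-diagonal entries. Your observation that the left-hand side of Corollary~\ref{Mobius ladder: Kirchhoff index} should read $Kf(M_n)$ rather than $Kf(CL_n)$ is also correct.
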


\end{document}